\theoremstyle{plain}
\newtheorem*{thm*}{Theorem} 
\newtheorem{thm}{Theorem} [section]
\newtheorem*{thm2*}{Theorem \ref{thmDecNilp}}
\newtheorem*{thm3*}{Theorem \ref{thmDecNilp2}}
\newtheorem{thmA}{Theorem}
\newtheorem{lem}[thm]{Lemma}
\newtheorem*{lem*}{Lemma}
\newtheorem{prop}[thm]{Proposition}
\newtheorem*{prop*}{Proposition}
\newtheorem{cor}[thm]{Corollary}
\newtheorem*{cor*}{Corollary}
\theoremstyle{definition}
\newtheorem*{defn*}{Definition}
\newtheorem{conjectureA}{Conjecture}
\newtheorem*{conjecture*}{Conjecture}
\newtheorem{exmp}[thm]{Example}
\newtheorem*{exmp*}{Example}
\newtheorem*{prob*}{Problem}
\newtheorem*{ques*}{Question}
\newtheorem*{remk*}{Remark}
\def\nn{\mathbb{N}}
\def\zz{\mathbb{Z}}
\begin{document}

\title{Conflict-Avoiding Codes of Prime Lengths and Cyclotomic Numbers}

\author{Liang-Chung Hsia}
\address{Department of Mathematics, National Taiwan Normal University, Taipei 11677, Taiwan, ROC}
\email{hsia@math.ntnu.edu.tw}

\author{Hua-Chieh Li}
\address{Department of Mathematics, National Taiwan Normal University, Taipei 11677, Taiwan, ROC}
\email{li@math.ntnu.edu.tw}

\author{Wei-Liang Sun}
\address{Department of Mathematics, National Taiwan Normal University, Taipei 11677, Taiwan, ROC}
\email{wlsun@ntnu.edu.tw}

\maketitle

\begin{abstract}
The problem to construct optimal conflict-avoiding codes of even lengths and the Hamming weight $3$ is completely settled. On the contrary, it is still open for odd lengths. It turns out that the prime lengths are the fundamental cases needed to be constructed. 
In the article, we study conflict-avoiding codes of prime lengths and give a connection with the so-called cyclotomic numbers. By having some nonzero cyclotomic numbers, a well-known algorithm for constructing optimal conflict-avoiding codes will work for certain prime lengths. As a consequence, we are able to answer the size of optimal conflict-avoiding code for a new class of prime lengths. \\

\noindent Keywords: binary protocol sequence, multiple-access collision channel without feedback, optimal conflict-avoiding code, cyclotomic numbers, cyclotomic matrices, squares.
\end{abstract}

\section{Introduction}
\label{sec1}

A binary protocol sequence set for transmitting data packets over a multiple-access collision channel without feedback is called a {\it conflict-avoiding code} (CAC) in information theory and has been studied few decades ago by \cite{Mat, NguGyoMas, GyoVaj, TsyRub, LevTon, Lev}. A mathematical model for CACs of {\it length} $n$ and ({\it Hamming}) {\it weight} $w$ is as follows. Let $\zz_n = \zz / n \zz$ be the additive group of $\zz$ modulo $n$. For a $w$-subset ${\bf x} = \{x_1, \ldots, x_w\}$ of $\zz_n$, denote the difference set of ${\bf x}$ by $\Delta({\bf x}) = \{x_i - x_j \mid i \neq j\}$. A CAC of length $n$ and weight $w$ is a collection $\mathcal{C}$ of $w$-subsets of $\zz_n$ such that $\Delta({\bf x}) \cap \Delta({\bf y}) = \varnothing$ for every distinct ${\bf x}, {\bf y}$ of $\mathcal{C}$. Each ${\bf x}$ is called a {\it codeword}. A CAC is said to be {\it optimal} if it has the maximal size among all CACs of the same length and weight. When the weight is $1$ or $2$, there is no difficulty to find the optimal size. For weights more than $2$, however, finding an optimal CAC and determining its size is still an open problem. The first challenge is the weight $3$ and we introduce its development below. 

Assume that the weight $w = 3$. The construction of an optimal CAC of an arbitrary even length is completely found by many authors \cite{LevTon, JimMisJanTeyTon, MisFuUru, FuLinMis}. However, the known results for odd lengths are still far from completion. In the case that the length $n$ is odd, the multiplicative order of $2$ in the multiplicative group of units of $\zz_p$ for prime divisors $p$ of $n$ becomes a key role in constructing an optimal CAC. For convenience, we denote $\zz_p^{\times}$ by the multiplicative group of $\zz_p$ and $o_p(2)$ by the multiplicative order of $2$ in $\zz_p^{\times}$ for an odd prime $p$. 
A construction of optimal CAC is found in \cite{LevTon} for prime length $p$ such that $4 \mid o_p(2)$. Later, the construction is improved in \cite{Lev} for composite number length where every prime divisor $p$ of the length satisfies $4 \mid o_p(2)$. 

The remaining situation is that the length has a prime divisor $p$ such that $4 \nmid o_p(2)$. In \cite{FuLoShu}, the authors prove that the problem of such length can be reduced to the problem of length $n$ such that {\it every} prime divisor $p$ of $n$ satisfies $4 \nmid o_p(2)$. They also prove that an optimal CAC of length $n = p^k$ can be constructed from an optimal CAC of length $p$ where $p$ is a non-Wieferich prime (that $2^{p-1} \not\equiv 1 \pmod{p^2}$). However, the construction for prime length $p$ is not completely found. For other odd lengths 
one can refer to \cite{Mom, WuFu, LinMisSatJim, MisMom, HsiLiSun} for the constructions. It turns out that the prime lengths are fundamental objects needed to be solved. This leads us to study CACs prime lengths and weight $3$.


For an odd prime length $p > 3$, a difference set $\Delta({\bf x})$ would have size $4$ or $6$ for a $3$-subset ${\bf x}$ of $\zz_p$. Without loss of generality, we can assume the codeword ${\bf x}$ to be $\{0, \alpha, \beta\}$ for some $\alpha, \beta \in \zz_p^{\times}$. Then $|\Delta({\bf x})| = 4$ if and only if ${\bf x} = \{0, \alpha, 2 \alpha\}$ for some $\alpha$. In this case, ${\bf x}$ is called an {\it equi-difference} codeword. To have an optimal CAC, it is natural to have equi-difference codewords as many as possible. 
If $M^e(p)$ denotes the maximal size among all CACs consisting of equi-difference codewords only, then one has 
$$M^e(p) = \frac{p-1-2 O(p)}{4}$$ where $O(p) = 0$ if $4$ divides $\left| \langle -1, 2 \rangle \right|$ and $O(p) = (p-1) / \left| \langle -1, 2 \rangle \right|$ otherwise. 
Moreover, \cite[Lemma~3]{FuLoShu} provides
\begin{align}
\label{upper bound: FLS}
M^e(p) \leq M(p) \leq M^e(p) + \left\lfloor \frac{O(p)}{3} \right\rfloor
\end{align}
where $M(p)$ denotes the maximal size among all CACs. 
Thus, if $O(p) < 3$, then a CAC consisting of equi-difference codewords only must be optimal. 
It remains to consider $O(p) \geq 3$. 
The authors of \cite{FuLoShu} propose an algorithm for constructing nonequi-difference codewords. They conjecture that the algorithm always works for prime length $p$ for constructing $\left\lfloor \frac{O(p)}{3} \right\rfloor$ nonequi-difference codewords. Then $\frac{p-1-2 O(p)}{4}$ equi-difference codewords could be further obtained and this CAC would be optimal by the upper bound in \eqref{upper bound: FLS}. 
For our purpose, we rephrase the statement of their conjecture in terms of cosets of the subgroup $\langle -1, 2 \rangle$, generated by $-1$ and $2$, in the multiplicative group $\zz_p^{\times} = \zz_p \setminus \{0\}$. 

\begin{conjectureA}[{\cite[Conjecture~1]{FuLoShu}}]
\label{conjA}
Let $p$ be a non-Wieferich prime. 
Then there are $3 \left\lfloor \frac{O(p)}{3} \right \rfloor$ cosets 
$A_1, B_1, C_1, \ldots, 
A_{\left\lfloor \frac{O(p)}{3} \right \rfloor}, 
B_{\left\lfloor \frac{O(p)}{3} \right \rfloor}, 
C_{\left\lfloor \frac{O(p)}{3} \right \rfloor}
$ 
of the subgroup $\langle -1, 2 \rangle$ in 
$\zz_p^{\times}$ such that for each $i$ there exists $(a_i, b_i, c_i) \in A_i \times B_i \times C_i$ satisfying 
$$
a_i + b_i = c_i \quad \text{in } \zz_p.
$$
\end{conjectureA}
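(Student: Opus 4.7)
The plan is to recast the conjecture as a nonvanishing statement for the cyclotomic numbers attached to the subgroup $H:=\langle -1,2\rangle\leq\zz_p^\times$, and then to resolve a combinatorial packing problem on the set of $H$-cosets $G:=\zz_p^\times/H$. The starting observation is that the equation $a+b=c$ is $H$-homogeneous: if $(a,b,c)\in A\times B\times C$ is a solution and $h\in H$, then $(ha,hb,hc)$ is too. Fixing a representative $c\in C$ and dividing through by $c$ converts the problem into $x+y=1$ with $x\in AC^{-1}$, $y\in BC^{-1}$. For cosets $X,Y$ of $H$ I would therefore set
\[
N(X,Y):=\bigl|\{(x,y)\in X\times Y : x+y=1\}\bigr|,
\]
which is the cyclotomic number of order $e:=O(p)$ (since $H$ is precisely the subgroup of $e$-th powers in the cyclic group $\zz_p^\times$). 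Existence of the desired $(a_i,b_i,c_i)$ for a triple $(A_i,B_i,C_i)$ is then equivalent to $N(A_iC_i^{-1},B_iC_i^{-1})>0$.

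Next I would collect symmetries and sum rules. Because $-1\in H$, swapping $x$ and $y$ gives $N(X,Y)=N(Y,X)$; the substitution $x\mapsto 1/x$ yields $N(X,Y)=N(X^{-1},YX^{-1})$, and the global identity $\sum_{X,Y}N(X,Y)=p-2$ forces row sums of order $|H|$. So on average $N(X,Y)\approx |H|/e$, and when $|H|$ is comfortably larger than $e$ most entries of the cyclotomic matrix are positive. The condition $2\in H$ and the non-Wieferich hypothesis should supply additional algebraic constraints, enough to pin down the entries (or at least their positivity pattern) in favorable cases.

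With these tools the conjecture becomes a $3$-uniform hypergraph packing: on vertex set $G$, form an edge on each triple $\{A,B,C\}$ of distinct cosets admitting $N(AC^{-1},BC^{-1})>0$, and look for a matching of size $\lfloor e/3\rfloor$. I would attempt a greedy construction: pick any valid triple, delete its three cosets from $G$, and iterate. The row-sum identity ensures that as long as the set of remaining cosets is not too small, there is enough mass to find another admissible triple, so the induction should close in the generic regime.

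The main obstacle is the borderline regime where $|H|$ is only a modest multiple of $e$. Here many cyclotomic numbers can vanish, the greedy procedure can jam with a handful of unmatchable cosets, and one needs genuine arithmetic input to rule out such obstructions. Classically this brings in Jacobi sums and explicit evaluations of $N(X,Y)$ under congruence conditions on $p$, together with the \emph{squares} structure highlighted in the abstract. I expect this is precisely where the paper's advertised new class of primes enters: the authors presumably isolate a congruence class in which the relevant Jacobi sums (or a square-index reasoning) force the required nonvanishing, while leaving the fully general conjecture open.
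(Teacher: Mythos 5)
There is a genuine gap, on two levels. First, note that the statement you are proving is a \emph{conjecture}: the paper does not establish it in general, but only for primes $p$ such that $\ell=[\zz_p^{\times}:\langle -1,2\rangle]$ is an odd prime (Theorem~\ref{thmA}, via Theorem~\ref{thm3.2}) or $\ell\in\{3,4,5\}$ (Proposition~\ref{prop3.1}). Your proposal, as written, proves neither the general statement nor any special case: both of its load-bearing steps are left as hopes. The greedy packing is explicitly conceded to ``jam'' in the borderline regime, and the arithmetic nonvanishing of the relevant cyclotomic numbers is deferred to unspecified ``Jacobi sums or square-index reasoning.''

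Second, and more concretely, the hypergraph-packing step is not just unfinished --- it is the wrong frame, and avoiding it is precisely the point of the paper's reduction. Since $\zz_p^{\times}/L$ is cyclic, one does not need $\lfloor O(p)/3\rfloor$ independent admissible triples: it suffices to find a \emph{single} generator $tL$ of the quotient and one solution $1+b=c$ with $b\in tL$, $c\in t^2L$ (Conjecture~\ref{conjB}); multiplying by $t^{3i}$ then produces the solution in $A_i\times B_i\times C_i$ for $A_i=t^{3i}L$, $B_i=t^{3i+1}L$, $C_i=t^{3i+2}L$, so all $\lfloor O(p)/3\rfloor$ triples come for free and no matching argument is ever needed. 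In your notation this is the statement that $N(g^{-i}L,g^{i}L)=A(i,2i)>0$ for some $i$ coprime to $\ell$, i.e.\ $s(\ell)>0$. The actual content of the paper is the proof of this nonvanishing when $\ell$ is an odd prime: the number of squares in $1+L$ equals $\tfrac12(1+A(0,0)+s(\ell))$ (Proposition~\ref{prop4.2}, using $A(i,-i)=A(i,2i)$ and the fact that for prime $\ell$ every nonzero residue is coprime to $\ell$), it is bounded below by $\tfrac14|L|$ via the multiplicativity of the Legendre symbol applied to $(1+\alpha^i)(1-\alpha^i)=1-\alpha^{2i}$ (Lemma~\ref{lem4.4}), and $A(0,0)\le \tfrac12|L|-1$ by the upper bound of \cite{BetHirKomMun}; together these force $s(\ell)>0$. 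Your average-value heuristic ($N(X,Y)\approx|H|/e$, positivity when $|H|\gg e$) would at best handle $e\ll p^{1/4}$ via Weil-type bounds and says nothing about the hard regime, which is exactly where the paper's square-counting argument operates.
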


We remark that each $(a_i, b_i, c_i)$ corresponds to the nonequi-difference codeword 
${\bf x}_i = \{0, a_i, c_i\}$ 
where 
$\Delta({\bf x}_i) = \{\pm a_i, \pm b_i, \pm c_i\}$. 
Those triples $(a_i, b_i, c_i)$ above correspond to a {\it hypergraph matching} in an auxiliary hypergraph in their terminology. 
We give an example to illustrate how Conjecture~\ref{conjA} and their algorithm work for constructing an optimal CAC. 

Consider CACs of length $p = 73$ and weight $3$ where $O(p) = 4$. First, there are four cosets of $\langle -1, 2 \rangle$ in $\zz_{73}^{\times}$: $$\langle -1, 2 \rangle, \quad 3 \langle -1, 2 \rangle, \quad 5 \langle -1, 2 \rangle, \quad 11 \langle -1, 2 \rangle.$$ 
We pick elements $a, b, c$ in three different cosets respectively such that $a + b = c$. For example, we choose $a = 2$, $b = 3$ and $c = 5$ from the first three cosets. Let ${\bf x} = \{0, 2, 5\}$. Then ${\bf x}$ is a nonequi-difference codeword and $\Delta({\bf x}) = \{\pm 2, \pm 3, \pm 5\}$. The elements in $\langle -1, 2 \rangle \setminus \Delta({\bf x})$ lead to four equi-difference codewords $${\bf x}_1 = \{0, 2^2, 2^3\}, \quad {\bf x}_2 = \{0, 2^4, 2^5\}, \quad {\bf x}_3 = \{0, 2^6, 2^7\}, \quad {\bf x}_4 = \{0, 2^8, 2^9\}.$$ It is similar to $3 \langle -1, 2 \rangle \setminus \Delta({\bf x})$, $5 \langle -1, 2 \rangle \setminus \Delta({\bf x})$ and $11 \langle -1, 2 \rangle \setminus \Delta({\bf x})$ that $${\bf x}_{i+4} = \{0, 3 \cdot 2^{2i}, 3 \cdot 2^{2i+1}\}, \quad {\bf x}_{i+8} = \{0, 5 \cdot 2^{2i}, 5\cdot 2^{2i+1}\}, \quad  {\bf x}_{i+12} = \{0, 11 \cdot 2^{2i}, 11 \cdot 2^{2i+1}\},$$ respectively, for $i = 1, 2, 3, 4$. The difference sets of all codewords ${\bf x}, {\bf x}_1, \ldots, {\bf x}_{16}$ are pairwisely disjoint. 
By the upper bound of $M(p)$ in \eqref{upper bound: FLS}, the collection $\mathcal{C} = \{{\bf x}, {\bf x}_1, \ldots, {\bf x}_{16}\}$ forms an optimal CAC which consists of $1$ nonequi-difference codeword and $16$ equi-difference codewords. If we choose $a = 6$, $b = 5$ and $c = 11$ such that $a + b = c$, then ${\bf x} = \{0, 6, 11\}$ and $\Delta({\bf x}) = \{\pm 6, \pm 5, \pm 11\}$. The same procedure will lead to another $16$ equi-difference codewords. Different choices of $a, b, c$ will give different optimal CACs.

The authors of \cite{MaZhaShe} independently give a similar thought to the existence of the triples $(A_i, B_i, C_i)$ in Conjecture~\ref{conjA} in terms of the group structure of the quotient group $\zz_p^{\times} / \langle -1, 2 \rangle$. 
They notice that $\zz_p^{\times} / \langle -1, 2 \rangle$ is a cyclic group because $\zz_p^{\times}$ is cyclic. 

\begin{conjectureA}[{\cite[Conjecture]{MaZhaShe}}]
\label{conjB}
Let $p$ be an odd prime. If $\left[ \zz_p^{\times} : \langle -1, 2 \rangle \right] \geq 3$, then there is a generator $t \langle -1, 2 \rangle$ of the cyclic group $\zz_p^{\times} / \langle -1, 2 \rangle$ for some $t \in \zz_p^{\times}$ such that $$1 + b = c$$ for some $b \in t \langle -1, 2 \rangle$ and $c \in t^2 \langle -1, 2 \rangle$.
\end{conjectureA}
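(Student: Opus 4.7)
The plan is to recast Conjecture~\ref{conjB} in terms of cyclotomic numbers and then attempt to establish nonvanishing of a single relevant cyclotomic number via Jacobi-sum estimates.

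Fix a primitive root $g$ modulo $p$, and set $H = \langle -1, 2 \rangle$ and $n = [\zz_p^\times : H] \geq 3$. Since $\zz_p^\times$ is cyclic, $H$ is the unique subgroup of index $n$, so $H = \{g^{jn}\}$; write $H_i = g^i H$ for $0 \leq i < n$. The condition $-1 \in H$ forces $2n \mid p-1$, so the classical theory of cyclotomic numbers $(i, j)_n := |\{x \in H_i : x+1 \in H_j\}|$ applies. An element $tH$ generates the cyclic quotient $\zz_p^\times / H$ precisely when $t \in H_k$ with $\gcd(k, n) = 1$, and in that case $t^2 H = H_{2k \bmod n}$. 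So Conjecture~\ref{conjB} is equivalent to the assertion
\begin{equation*}
(k, 2k)_n \geq 1 \quad \text{for some } k \text{ with } \gcd(k, n) = 1.
\end{equation*}

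Next I would attack this by character sums. Let $\chi$ be a multiplicative character of $\zz_p^\times$ of order $n$, so $\chi$ is trivial on $H$ and $\zeta := \chi(g)$ is a primitive $n$th root of unity. Orthogonality gives
\begin{equation*}
(k, 2k)_n \; = \; \frac{1}{n^2} \sum_{a, b = 0}^{n-1} \zeta^{-(a + 2b) k} \, J(\chi^a, \chi^b),
\end{equation*}
where $J(\chi^a, \chi^b) = \sum_{x \neq 0, -1} \chi^a(x)\chi^b(x+1)$ (up to a bounded boundary correction from $x=-1$). Summing over $k$ coprime to $n$ replaces the $k$-exponential by a Ramanujan sum $c_n(a + 2b)$; the trivial term $(a, b) = (0, 0)$ contributes the main term $\phi(n)(p-2)/n^2$, while the remaining Jacobi sums are bounded by $\sqrt{p}$ via the Weil bound. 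This yields a lower estimate
\begin{equation*}
\sum_{\gcd(k, n) = 1} (k, 2k)_n \; \geq \; \frac{\phi(n)(p - 2)}{n^2} \; - \; E(n) \sqrt{p},
\end{equation*}
with an explicit $E(n)$ depending only on $n$, which is positive (and hence forces at least one $(k, 2k)_n \geq 1$) whenever $p$ is large compared to a polynomial in $n$.

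The principal obstacle is the intermediate regime in which $n$ grows like a small power of $p$: here the Weil error swamps the main term. In that range one must exploit more of the structure, in particular that $H$ contains $2$: multiplication by $2$ preserves each coset $H_i$ and so endows the cyclotomic matrix $\bigl((i,j)_n\bigr)$ with symmetries beyond those that hold for an arbitrary subgroup of index $n$. My plan would be to combine these symmetries with the standard identities $(i, j)_n = (j, i)_n = (n - i, j - i)_n$, valid since $2n \mid p-1$, to cut the number of independent cyclotomic numbers and then either extract a clean closed-form expression for $\sum_{\gcd(k,n)=1} (k, 2k)_n$ or reduce the remaining cases to a finite verification. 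I expect the genuine difficulty to lie exactly in this moderate-$n$ window, where neither analytic estimates nor small-$n$ casework alone suffices; overcoming it will likely require new algebraic identities among cyclotomic numbers, or a refined counting on the auxiliary hypergraph of \cite{FuLoShu}.
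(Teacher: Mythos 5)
Your reduction of the statement to the nonvanishing of $(k,2k)_n$ for some $k$ coprime to $n$ is exactly the paper's Section~\ref{sec2} (Lemma~\ref{lem2.3}), but from that point on your proposal is not a proof: it is an asymptotic plan that, even if every detail were filled in, establishes the claim only when $p$ exceeds a polynomial in $n=\ell$, and you yourself flag the remaining regime as open. This gap is not hypothetical. For $n$ prime your main term is $\phi(n)(p-2)/n^{2}\approx p/n$ while the Weil error is on the order of a constant times $\sqrt{p}$, so positivity requires roughly $p\gg n^{2}$; the paper's own showcase example $p=1229241823$, $\ell=18307$ has $4\ell^{2}>p$ and therefore falls outside the range your estimate can reach. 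So the character-sum route, as sketched, does not recover even the cases the paper actually settles. The second half of your plan (exploit the symmetries $A(i,j)=A(j,i)=A(-i,j-i)$ and the fact that $2\in L$) is the paper's Section~\ref{sec3}, but there it only closes the cases $\ell=3,4,5$; it is not the mechanism behind the general prime-$\ell$ result.

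The idea you are missing is the paper's square-counting argument, which is uniform in $p$. One shows (Lemma~\ref{lem3.4} and Proposition~\ref{prop4.2}) that the number of squares in $1+L$ equals $\tfrac12\bigl(1+A(0,0)+\sum_{i=1}^{\ell-1}A(i,2i)\bigr)$, which for $\ell$ an odd prime is $\tfrac12\bigl(1+A(0,0)+s(\ell)\bigr)$ since every nonzero residue is coprime to $\ell$. A Legendre-symbol argument using the factorization $(1+\alpha)(1-\alpha)=1-\alpha^{2}$ gives the lower bound $|L|/4$ for the number of squares in $1+L$ (Lemma~\ref{lem4.4}), while the upper bound $A(0,0)\le |L|/2-1$ from \cite{BetHirKomMun} (Lemma~\ref{lem4.2}) then forces $s(\ell)>0$ with no largeness hypothesis on $p$. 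That said, your approach is genuinely complementary rather than merely weaker: the Jacobi-sum estimate is indifferent to whether $n$ is prime, so a careful version of it would yield the conjecture for \emph{composite} $\ell$ once $p$ is large relative to $\ell$, a regime the paper leaves open (see its concluding remarks on $\ell=6$). To make that contribution usable you would need to compute the explicit constant $E(n)$, handle the degenerate Jacobi sums (those with $a=0$, $b=0$, or $a+b\equiv 0\pmod n$, which are $O(1)$ rather than of modulus $\sqrt{p}$), and state the resulting explicit threshold on $p$; as written, none of that is done, and the conjecture in full remains unproved by your argument just as it does by the paper.
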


Note that $\left[ \zz_p^{\times} : \langle -1, 2 \rangle \right] \geq 3$ implies either $O(p) = 0$ or $O(p) \geq 3$. Moreover, if Conjecture~\ref{conjB} holds for a prime $p$, then one can set $A_1 = t^0 \langle -1, 2 \rangle$, $B_1 = t^1 \langle -1, 2 \rangle$, $C_1 = t^2 \langle -1, 2 \rangle$, $A_2 = t^3 \langle -1, 2 \rangle$, $B_2 = t^4 \langle -1, 2 \rangle$, $C_2 = t^5 \langle -1, 2 \rangle$ and so on to obtain $t^{3 i} + t^{3 i} b = t^{3 i} c$ for each $i$ where $(t^{3i}, t^{3 i} b, t^{3 i} c) \in A_i \times B_i \times C_i$. Hence, Conjecture~\ref{conjB} implies Conjecture~\ref{conjA}. 
For the previous example that $p = 73$, $t$ can be chosen as $5$. 
Then $b = 5$, $c = 6 \in 5^2 \langle -1, 2 \rangle$ and $1 + 5 = 6$ in $\zz_p$. 
Moreover, $p$ is not assumed to be non-Wieferich. 
In \cite{MaZhaShe}, the authors have checked in computer that Conjecture~\ref{conjB} holds for prime $p \leq 2^{30}$. 

The motivation of this article is to study Conjecture~\ref{conjB}. Throughout this article, we denote $$\ell = \left[ \zz_p^{\times} : \langle -1, 2 \rangle \right] =\left|\zz_p^{\times} / \langle -1, 2 \rangle\right|$$ for a given prime $p$. 
The existence of $b$ and $c$ in Conjecture~\ref{conjB} is strongly related to the so-called {\it cyclotomic numbers $A(i, j)$ of order $\ell$} with respect to a primitive root where $i, j$ are integers. In particular, both $b$ and $c$ exist if and only if $A(i, 2 i) \neq 0$ for certain $i$. 
More precisely, Conjecture~\ref{conjB} is equivalent to the following statement. See Section~\ref{sec2} for detail.

\begin{conjecture*}
Let $p$ be an odd prime and let $\ell = \left[ \zz_p^{\times} : \langle -1, 2 \rangle \right]$. If $\ell \geq 3$, then $A(i, 2i) \neq 0$ for some $1 \leq i \leq \ell-1$ with $(i, \ell) = 1$. 
\end{conjecture*}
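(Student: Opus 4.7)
The plan is to translate the conjecture into a statement about character sums over $\ff_p$ and then apply Weil-type bounds on Jacobi sums. First I would identify the subgroup $\langle -1, 2\rangle$ with the $\ell$-th powers subgroup: since $\zz_p^\times$ is cyclic and $\langle -1, 2 \rangle$ has index $\ell$, it must coincide with $\langle g^\ell \rangle$ for any primitive root $g$. Consequently, the cosets $C_i := g^i \langle -1, 2 \rangle$ are precisely the classical cyclotomic classes of order $\ell$, and by definition $A(i, 2i)$ equals $\#\{x \in C_i : 1 + x \in C_{2i}\}$. The conjecture then asks, for each prime $p$ with $\ell \ge 3$, for a single $i$ coprime to $\ell$ admitting such an $x$.

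Next I would derive a character-sum formula for $A(i, 2i)$. Let $\chi : \zz_p^\times \to \cc^\times$ be of order $\ell$ and set $\omega = \chi(g)$. Since $-1 \in \langle -1, 2 \rangle = C_0$, each $\chi^k$ is trivial on $-1$. Expanding $\mathbf{1}_{C_j}(y) = \frac{1}{\ell}\sum_k \omega^{-kj}\chi^k(y)$ gives
\begin{align*}
A(i, 2i) = \frac{1}{\ell^2}\bigl(p + \mathcal{E}(i)\bigr), \qquad \mathcal{E}(i) := \sum_{k_1, k_2 = 1}^{\ell - 1} \omega^{-(k_1 + 2 k_2) i}\, J(\chi^{k_1}, \chi^{k_2}),
\end{align*}
where $J$ is the Jacobi sum and the constant $p$ emerges after collecting the trivial-character contributions.

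The workhorse is the Weil bound: $|J(\chi^{k_1}, \chi^{k_2})| = \sqrt{p}$ whenever $\chi^{k_1}, \chi^{k_2}, \chi^{k_1+k_2}$ are all nontrivial, while $|J(\chi^{k_1}, \chi^{k_2})| = 1$ when $k_1 + k_2 \equiv 0 \pmod{\ell}$. Hence $|\mathcal{E}(i)| \le (\ell - 1)(\ell - 2)\sqrt{p} + (\ell - 1)$, and $A(i, 2i) > 0$ for \emph{every} $i$ coprime to $\ell$ as soon as $\sqrt{p} > (\ell - 1)(\ell - 2)$, i.e., roughly $p \gg \ell^4$. This already settles the conjecture in the regime where $p$ is sufficiently large relative to $\ell$.

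For the remaining primes---those with $\ell$ comparable to or larger than $p^{1/4}$, i.e., with unusually small $o_p(2)$---I would try three refinements. First, averaging $A(i, 2i)$ over the $\varphi(\ell)$ indices coprime to $\ell$ replaces the roots of unity in $\mathcal{E}(i)$ by Ramanujan sums $c_\ell(k_1 + 2 k_2)$, which typically exhibit substantial cancellation. Second, for small $\ell$ I would invoke the classical closed-form evaluations of cyclotomic numbers (expressed in terms of quadratic forms such as $4p = a^2 + \ell b^2$) to rule out simultaneous vanishing of all $A(i, 2i)$. Third, I would exploit internal symmetries of the cyclotomic matrix---for example $A(i, 2i) = A(\ell - i, i)$, obtained via $x \mapsto -x$, together with the row-sum identities---to reduce the number of independent vanishing conditions. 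The hardest regime will be the intermediate range where Weil's bound is too weak but $\ell$ is too large for the classical explicit formulas; that is where I expect the bulk of the work in the paper, presumably by isolating a ``new class of primes'' for which additional arithmetic structure forces at least one $A(i, 2i)$ to be positive.
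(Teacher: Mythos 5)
Your reduction to Jacobi sums is correct as far as it goes, but the regime it covers is essentially disjoint from the one that matters. The Weil bound gives $A(i,2i)>0$ only when $\sqrt{p}>(\ell-1)(\ell-2)$ (up to the $O(\ell)$ terms you absorb into the constant), i.e.\ $p\gg\ell^{4}$. In the conflict-avoiding-code setting $\ell=(p-1)/|\langle -1,2\rangle|$ and $|\langle -1,2\rangle|\le 2\,o_p(2)$ can be as small as about $2\log_2 p$, so $\ell$ is typically enormous compared with $p^{1/4}$; in the paper's own example $p=331$, $\ell=11$ one already has $(\ell-1)(\ell-2)=90>\sqrt{331}$, and for $p=1229241823$, $\ell=18307$ the bound is off by four orders of magnitude. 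Your three proposed patches do not close this gap: averaging over $i$ coprime to $\ell$ replaces the twist by Ramanujan sums but still leaves an error of order $2^{\omega(\ell)}\sqrt{p}/\ell$ against a main term $p/\ell^{2}$, so one still needs $p\gg\ell^{2}$; the classical explicit evaluations exist only for very small $\ell$; and the symmetry $A(i,2i)=A(\ell-i,i)$ (which is correct, and is the paper's Lemma~\ref{lem4.1}) only halves the number of vanishing conditions. So the proposal proves the statement only for $p$ large relative to $\ell$, which is not where the difficulty lies.

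The paper's route is entirely different and elementary, and it is worth noting that the paper does \emph{not} prove the displayed conjecture in full either: it settles $\ell\in\{3,4,5\}$ by linear relations among entries of the extended cyclotomic matrix (Proposition~\ref{prop3.1}), and then proves the case of \emph{prime} $\ell$ (Theorem~\ref{thm3.2}), leaving composite $\ell\ge 6$ open. The key identity is that the number of squares in $1+L$ equals $\tfrac12\bigl(1+\sum_{i}A(i,-i)\bigr)=\tfrac12\bigl(1+A(0,0)+s(\ell)\bigr)$ when $\ell$ is prime, since then every nonzero residue $i$ is coprime to $\ell$ and $A(i,-i)=A(i,2i)$. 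A lower bound strictly greater than $|L|/4$ for the number of squares in $1+L$ comes from multiplicativity of the Legendre symbol applied to $(1+\alpha^{i})(1-\alpha^{i})=1-\alpha^{2i}$ (Lemma~\ref{lem4.4}), and the upper bound $A(0,0)\le |L|/2-1$ is imported from \cite{BetHirKomMun}; together these force $s(\ell)>0$ with no constraint on the relative sizes of $p$ and $\ell$. If you want to salvage your approach, the honest statement is that it gives an independent proof of the conjecture for all $\ell\ge 3$ (prime or not) in the range $p>((\ell-1)(\ell-2))^{2}$, which complements rather than reproduces the paper's result; to match the paper you would still need an argument, like the square-counting one, that is uniform in $\ell$.
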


The cyclotomic numbers was considered by C.F. Gauss \cite{Gau} and later L.E. Dickson \cite{Dic, Dic3, Dic4} studied these numbers in the context of Waring's problem. One of the central problems is to determine these numbers in terms of solutions of a certain Diophantine systems. 
See also the surveys \cite{Raj,Kat2, AhmTan}. However, it is still not obvious whether $A(i, 2i) \neq 0$ because it is difficult either to find a suitable Diophantine system or to write down the formulas of cyclotomic numbers when $\ell$ is getting larger. Recently, cyclotomic numbers are developed into a secured public-key cryptography model \cite{AhmTanPus}.

In this article, we study the conjecture above and we will prove the following result.

\begin{thmA}
\label{thmA}
Let $p$ be an odd prime and let $\ell = [\zz_p^{\times} : \langle -1, 2 \rangle]$. If $\ell$ is an odd prime, then $A(i, 2 i) \neq 0$ for some $1 \leq i \leq \ell-1$ with $(i, \ell) = 1$.
\end{thmA}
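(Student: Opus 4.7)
The plan is to argue by contradiction. Suppose $A(i, 2i) = 0$ for every $1 \leq i \leq \ell - 1$---this is the vanishing for all $i$ with $\gcd(i, \ell) = 1$, since $\ell$ is prime. The standard symmetries $A(i, j) = A(j, i)$ and $A(i, j) = A(-i, j - i)$ (both available because $-1 \in \langle -1, 2\rangle$) give $A(i, 2i) = A(i, -i)$, so the hypothesis is equivalent to
\[
V := \{ v \in \zz_p \setminus \{0, 1\} : v(1 - v) \in C_0 \} \subseteq C_0,
\]
where $C_0 := \langle -1, 2 \rangle$. My goal is to exhibit some $v \in V \setminus C_0$---equivalently, to prove the strict inequality $|V| > |V \cap C_0| = A(0, 0)$.

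Fix a multiplicative character $\chi$ of $\zz_p^\times$ of order $\ell$; since $\chi$ is trivial on $C_0$, we have $\chi(-1) = \chi(2) = 1$. Orthogonality yields
\[
|V| = \frac{1}{\ell}\Big( (p - 2) + \sum_{a \neq 0} J(\chi^a, \chi^a) \Big), \qquad A(0, 0) = \frac{1}{\ell^2}\Big( (p - 2\ell) + \sum_{a, b \neq 0} J(\chi^a, \chi^b) \Big).
\]
The identity $J(\chi^a, \chi^a) = J(\chi_2, \chi^a)$, where $\chi_2$ is the quadratic character, follows from the Hasse--Davenport product formula together with $\chi^a(4) = 1$; it replaces the first Jacobi sums with ones involving $\chi_2$, each of modulus $\sqrt{p}$ by the Weil bound. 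Careful bookkeeping of the degenerate pairs (those with a trivial factor) then produces an estimate of the shape
\[
\ell^2 \bigl( |V| - A(0, 0) \bigr) = (\ell - 1)(p + 1) + E,
\]
with $|E|$ controlled by an explicit constant multiple of $\ell^2 \sqrt{p}$. Consequently $|V| > A(0, 0)$ for all $p$ above an explicit threshold of order $\ell^2$---concretely $p > 4(\ell - 2)^2$---yielding the desired contradiction in that range.

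The hard part, and the main obstacle, is the finite collection of pairs $(p, \ell)$ that fall in the ``Weil gap'': $\ell$ prime, $\ell = [\zz_p^\times : \langle -1, 2 \rangle]$, and $p \leq 4(\ell - 2)^2$. Such pairs do arise (for instance $(p, \ell) = (1103, 19)$ with $\mathrm{ord}_{1103}(2) = 29$), so the Weil bound alone does not close the argument. Here I would exploit the stringent arithmetic constraint forced by the index: the multiplicative order of $2$ modulo $p$ must be either $(p - 1)/\ell$ or $(p - 1)/(2\ell)$, so $p$ divides $2^m - 1$ for some small $m$. A Zsygmondy-style argument classifies such primes, reducing the remaining analysis to a finite checklist---which could then be dispatched either by direct inspection or by a Stickelberger-type refinement of the Weil bound, using the prime decomposition of Jacobi sums in $\zz[\zeta_\ell]$ and the extra Galois-invariance that the vanishing hypothesis would impose.
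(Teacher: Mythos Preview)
Your reduction to showing $|V| > A(0,0)$ is correct, and the Jacobi--sum computation yielding
\[
\ell^2\bigl(|V|-A(0,0)\bigr) = (\ell-1)(p+1) + E, \qquad |E| \le 2(\ell-1)^2\sqrt{p},
\]
is sound; this does settle the claim once $p$ exceeds roughly $4(\ell-1)^2$. The genuine gap is everything after that. Your handling of the ``Weil gap'' is only a sketch, and the crucial step---reducing the exceptional pairs $(p,\ell)$ to a \emph{finite} checklist---does not follow from the constraints you list. For each prime $\ell$ there may be several primes $p$ with $2\ell < p \le 4(\ell-1)^2$ and $[\zz_p^\times : \langle -1,2\rangle]=\ell$ (your own example $p=1103$, $\ell=19$ is one), and nothing you have written bounds the totality of such pairs as $\ell$ ranges over all primes. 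Zsygmondy controls \emph{primitive} prime divisors of $2^m-1$, not the full divisor structure you would need here, and ``Stickelberger-type refinement'' is not an argument. As it stands the proof is incomplete.

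The paper closes exactly this gap by replacing both of your Weil estimates with sharper elementary ones that hold uniformly in $p$. First, it interprets $|V|=\sum_{i=0}^{\ell-1}A(i,-i)$ as $2M-1$, where $M$ is the number of squares in $1+L$, and proves $M>\tfrac14|L|$ by the pigeonhole observation that among $1+\alpha$, $1-\alpha$, $1-\alpha^2$ at least one is a square (Lemma~\ref{lem4.4}); this gives $|V|>\tfrac12|L|-1$ with no error term. Second, instead of bounding $A(0,0)$ by $p/\ell^2+O(\sqrt{p})$, it invokes the linear-algebra bound of Betsumiya--Hirasaka--Komatsu--Munemasa, $A(0,0)\le \tfrac12|L|-1$ (Lemma~\ref{lem4.2}). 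Combining these yields $|V|>A(0,0)$ for \emph{every} $p$, with no residual cases. If you want to salvage your approach, the cleanest fix is to import precisely these two inequalities in place of the Weil bound.
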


This not only answers Conjecture~\ref{conjB} but Conjecture~\ref{conjA} for a class of primes $p$. Moreover, the size $M(p)$ of optimal CAC can be determined as follows, while $O(p) = \ell$ if $4 \nmid o_p(2)$.

\begin{thmA}
\label{thmB}
Let $p$ be an odd prime with $4 \nmid o_p(2)$. If $\ell = [\zz_p^{\times} : \langle -1, 2 \rangle]$ is an odd prime, then an optimal conflict-avoiding code of length $p$ and weight $3$ has the size $$\frac{p-1-2 \ell}{4} + \left\lfloor \frac{\ell}{3} \right\rfloor.$$
\end{thmA}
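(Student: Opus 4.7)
The plan is to deduce Theorem~\ref{thmB} by matching the upper bound in \eqref{upper bound: FLS} with a construction fueled by Theorem~\ref{thmA}. Since $4\nmid o_p(2)$, an elementary argument (noting that $2^{o_p(2)/2}=-1$ whenever $o_p(2)$ is even) gives $4\nmid |\langle -1,2\rangle|$ and hence $O(p)=\ell$, so
\[
M^e(p)=\frac{p-1-2\ell}{4}\qquad\text{and}\qquad M(p)\le \frac{p-1-2\ell}{4}+\left\lfloor\frac{\ell}{3}\right\rfloor.
\]
It therefore suffices to exhibit a CAC of length $p$ and weight $3$ attaining this upper bound.

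For the nonequi-difference part I would invoke Theorem~\ref{thmA} to obtain an index $i$ with $1\le i\le\ell-1$, $(i,\ell)=1$, and $A(i,2i)\ne 0$. By the cyclotomic-number dictionary developed in Section~\ref{sec2}, this non-vanishing produces $b\in t^i\langle -1,2\rangle$ and $c\in t^{2i}\langle -1,2\rangle$ with $1+b=c$, where $t$ is a fixed primitive root modulo $p$. Because $\ell$ is prime and $i\not\equiv 0\pmod{\ell}$, the coset $t^i\langle -1,2\rangle$ generates the cyclic quotient $\zz_p^\times/\langle -1,2\rangle$; hence Conjecture~\ref{conjB} holds for $p$, and by the implication noted right after its statement, Conjecture~\ref{conjA} holds as well. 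Explicitly, the triples
\[
(a_j,b_j,c_j)=\bigl(t^{3(j-1)},\,t^{3(j-1)}b,\,t^{3(j-1)}c\bigr),\qquad 1\le j\le \lfloor \ell/3\rfloor,
\]
lie in pairwise distinct cosets of $\langle -1,2\rangle$ and satisfy $a_j+b_j=c_j$.

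Feeding these $\lfloor \ell/3\rfloor$ triples into the Fu--Lo--Shum algorithm yields $\lfloor \ell/3\rfloor$ pairwise difference-disjoint nonequi-difference codewords $\mathbf{x}_j=\{0,a_j,c_j\}$. The residual elements of each coset of $\langle -1,2\rangle$ organize into $\{\pm\alpha,\pm 2\alpha\}$-blocks, each contributing an equi-difference codeword $\{0,\alpha,2\alpha\}$ with differences disjoint from those of the $\mathbf{x}_j$'s, for a total of $\tfrac{p-1-2\ell}{4}$ equi-difference codewords, exactly as illustrated for $p=73$ in the introduction. The resulting CAC thus has size $\tfrac{p-1-2\ell}{4}+\lfloor \ell/3\rfloor$ and is therefore optimal.

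The genuine difficulty of Theorem~\ref{thmB} is absorbed into Theorem~\ref{thmA}; once $A(i,2i)\ne 0$ is available for some $i$ coprime to $\ell$, the primality of $\ell$ immediately promotes the corresponding coset to a generator of $\zz_p^\times/\langle -1,2\rangle$, and the rest is bookkeeping along the Fu--Lo--Shum recipe driven by the matching upper bound in \eqref{upper bound: FLS}.
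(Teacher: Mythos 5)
Your proposal is correct and follows essentially the same route as the paper: Theorem~\ref{thmA} gives $A(i,2i)\ne 0$ for some $i$ coprime to $\ell$, which by the dictionary of Section~\ref{sec2} (Lemma~\ref{lem2.3}) establishes Conjecture~\ref{conjB} and hence Conjecture~\ref{conjA}, and the size then follows from the Fu--Lo--Shum construction matched against the upper bound in \eqref{upper bound: FLS}; the paper states $O(p)=\ell$ under $4\nmid o_p(2)$ without proof, and your verification of this is the only added detail. One cosmetic caution: in your explicit triples the powers should be taken of the generator $t^i$ (i.e.\ $t^{3i(j-1)}$), not of the original primitive root $t$, since for general $i$ coprime to $\ell$ the cosets $t^{3(j-1)}L,\,t^{3(j-1)+i}L,\,t^{3(j-1)+2i}L$ need not be pairwise distinct across $j$.
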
 


This article is organized as follows. In Section~\ref{sec2}, we introduce cyclotomic numbers and see how these numbers connect to Conjecture~\ref{conjB}. 
The cyclotomic numbers will lead to an interesting matrix in Section~\ref{sec3}. This matrix can be used to answer Conjecture~\ref{conjB}, as well as Conjecture~\ref{conjA}, for primes with small $\ell$. 
In Section~\ref{sec4}, we will see that the number of squares in a certain subset of $\zz_p$ can be represented to a particular sum of cyclotomic numbers. By counting squares and using an upper bound of cyclotomic numbers given in \cite{BetHirKomMun}, we will be able to prove Theorem~\ref{thmA}. 
Finally, we remark how an obstacle is encountered for primes with composite number $\ell$.

\section{Preliminaries}
\label{sec2}

In this article, $p$ is an odd prime and $\zz_p$ is the finite field of $p$ elements. Throughout this article, we denote $$L = \langle -1, 2 \rangle$$ the subgroup of $\zz_p^{\times} = \zz_p \setminus \{0\}$ generated by $-1$ and $2$, and then $\ell = \left[ \zz_p^{\times} : L \right]$ is the index of $L$ in $\zz_p^{\times}$. We always assume $$\ell \geq 3.$$ A generator of $\zz_p^{\times}$ is also called a {\it primitive root} of $\zz_p$. Fix a primitive root $g$. 
Define the number $$A(i, j) = \left| \left\{(m, n) \mid 0 \leq m, n \leq \frac{p-1}{\ell} - 1 \text{ and } 1 + g^{i + m \ell} = g^{j + n \ell} \right\} \right|$$ for $i, j \in \zz$.
These numbers $A(i, j)$ are called {\it cyclotomic numbers of order $\ell$ with respect to $g$}. We can rewrite $A(i, j)$ as follows. Note that $|L| = \frac{p-1}{\ell}$ and $L$ consists of all $\ell$-th power of elements of $\zz_p^{\times}$. Thus, both $g^{m \ell}, g^{n \ell} \in L$ for $0 \leq m, n \leq |L| - 1$. 
It follows that $$A(i, j) = \left| (1 + g^i L) \cap g^j L \right|$$ for $i, j \in \zz$. If $i'$ and $j'$ are integers such that $i' \equiv i \pmod{\ell}$ and $j' \equiv j \pmod{\ell}$, then $g^{i'}L = g^{i} L$ and $g^{j'} L = g^{j} L$. We have $A(i', j') = A(i, j)$. Hence, it is convenient to consider $i, j$ modulo $\ell$.

As $g$ is a primitive root and $\ell = \left[ \zz_p^{\times} : L \right]$, the quotient group $\zz_p^{\times} / L$ consists of elements $g^0 L = L, g^1 L, \ldots, g^{\ell-1} L$. In particular, the set $$\left\{g^i L \mid 1 \leq i \leq \ell \text{ and } (i, \ell) = 1\right\}$$ is a complete set of generators of $\zz_p^{\times} / L$ where $(\cdot, \cdot)$ denotes the greatest common divisor of two given integers. Let $t \in \zz_p^{\times}$. Then $t L$ generates $\zz_p^{\times} / L$ if and only if $t L = g^i L$ for some $1 \leq i \leq \ell$ with $(i, \ell) = 1$. 
Moreover, $$\left| (1 + t L) \cap t^2 L \right| = \left| (1 + g^i L) \cap g^{2 i} L \right| = A(i, 2 i).$$ Thus, Conjecture~\ref{conjB} holds for $p$ if and only if $A(i, 2 i) \neq 0$ for some $1 \leq i \leq \ell$ with $(i, \ell) = 1$. Because each $A(i, 2i)$ is nonnegative, it is enough to consider the following sum $$s(\ell) = \sum_{\substack{1 \leq i \leq \ell, \\ (i, \ell) = 1}} A(i, 2i).$$ The argument above actually shows the following lemma.

\begin{lem}
\label{lem2.3}
Let $p$ and $\ell$ be as above. Then $s(\ell) \neq 0$ if and only if \textup{Conjecture~\ref{conjB}} holds for this $p$.
\end{lem}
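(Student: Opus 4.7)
The plan is to observe that this lemma is essentially a bookkeeping restatement of the discussion immediately preceding it, and the proof amounts to unpacking Conjecture~\ref{conjB} into the cyclotomic-number language that has just been set up.

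First, I would reinterpret the conclusion of Conjecture~\ref{conjB}. The statement ``$1 + b = c$ for some $b \in tL$ and $c \in t^2 L$'' is, verbatim, the statement that $(1 + tL) \cap t^2 L \neq \varnothing$. Thus Conjecture~\ref{conjB} holds for $p$ if and only if there exists $t \in \zz_p^{\times}$ such that $tL$ generates the cyclic group $\zz_p^{\times}/L$ and $(1 + tL) \cap t^2 L \neq \varnothing$.

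Second, I would use the parametrization of generators already recorded above the lemma: the cosets $g^iL$ with $1 \leq i \leq \ell$ and $(i,\ell)=1$ form a complete list of generators of $\zz_p^{\times}/L$, and for $tL = g^i L$ one has
\[
\left| (1 + tL) \cap t^2 L \right| \;=\; \left| (1 + g^i L) \cap g^{2i} L \right| \;=\; A(i, 2i).
\]
Consequently the existence of a generator $tL$ satisfying Conjecture~\ref{conjB} is equivalent to the existence of an index $i$ with $1 \leq i \leq \ell$, $(i,\ell)=1$, and $A(i,2i) \neq 0$.

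Finally, since every cyclotomic number $A(i,2i)$ is a nonnegative integer, the sum
\[
s(\ell) \;=\; \sum_{\substack{1 \leq i \leq \ell \\ (i,\ell) = 1}} A(i, 2i)
\]
is nonzero if and only if at least one of its summands is nonzero. Combining this with the previous equivalence yields $s(\ell) \neq 0 \iff \text{Conjecture~\ref{conjB} holds for }p$, which is the desired conclusion. There is no real obstacle: the lemma simply packages the running discussion into a clean statement for later reference, and the nonnegativity of cyclotomic numbers is the only ``ingredient'' beyond the definitions.
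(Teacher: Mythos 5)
Your proposal is correct and matches the paper's own argument, which is exactly the discussion preceding the lemma: parametrizing generators of $\zz_p^{\times}/L$ by $g^iL$ with $(i,\ell)=1$, identifying $\left|(1+tL)\cap t^2L\right|$ with $A(i,2i)$, and using nonnegativity of the cyclotomic numbers to pass to the sum $s(\ell)$. No differences worth noting.
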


The sum $s(\ell)$ is defined with respect to a given primitive root $g$. In fact, we can freely compute $s(\ell)$ when picking an {\it arbitrary} primitive root. This helps a lot in practice. We leave a proof below for the readers because we do not find any reference for this fact. 

\begin{prop}
\label{prop2.1}
The sum $s(\ell)$ is independent on the choices of primitive roots. 
\end{prop}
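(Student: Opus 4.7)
The plan is to track how the numbers $A(i,j)$ transform under a change of primitive root, and then observe that the defining condition of the index set for $s(\ell)$ is preserved by this transformation.

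First, I would note that $L$ is intrinsically defined, independently of any primitive root: since $\zz_p^\times$ is cyclic of order $p-1$ and $\ell\mid p-1$, the subgroup $L$ of index $\ell$ is unique and equals the set of $\ell$-th powers $(\zz_p^\times)^\ell$. So switching the primitive root changes the labels $g^iL$ of the cosets but not the cosets themselves.

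Next, let $g$ and $g'$ be two primitive roots and write $g'=g^r$ where $\gcd(r,p-1)=1$. In particular $\gcd(r,\ell)=1$, so multiplication by $r$ induces a bijection on $\zz/\ell\zz$ that restricts to a bijection on $(\zz/\ell\zz)^\times$. Directly from the coset description
\[
A(i,j)=\bigl|(1+g^iL)\cap g^jL\bigr|,
\]
one reads off
\[
A'(i,j):=\bigl|(1+(g')^iL)\cap (g')^jL\bigr|=\bigl|(1+g^{ri}L)\cap g^{rj}L\bigr|=A(ri,rj),
\]
where the arguments are understood modulo $\ell$ (this is legitimate because $g^\ell\in L$).

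Finally, I would change variables in the sum. Writing $s'(\ell)$ for the sum computed with respect to $g'$ and setting $j\equiv ri\pmod{\ell}$,
\[
s'(\ell)=\sum_{\substack{1\leq i\leq \ell\\ (i,\ell)=1}} A'(i,2i)=\sum_{\substack{1\leq i\leq \ell\\ (i,\ell)=1}} A(ri,\,2ri)=\sum_{\substack{1\leq j\leq \ell\\ (j,\ell)=1}} A(j,2j)=s(\ell),
\]
since $i\mapsto ri\bmod\ell$ permutes $\{1\leq i\leq \ell:(i,\ell)=1\}$ and $2ri\equiv 2j\pmod\ell$. There is no real obstacle here beyond being careful that reductions modulo $\ell$ are compatible with the coset description of $A(i,j)$; the proof is essentially just the change-of-variables bookkeeping outlined above.
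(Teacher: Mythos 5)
Your proof is correct and follows essentially the same route as the paper's: both express the new cyclotomic numbers as $A'(i,j)=A(ki,kj)$ for some $k$ coprime to $\ell$ (you obtain $k$ from $g'=g^r$ with $\gcd(r,p-1)=1$, the paper from $hL=g^kL$ in the quotient group) and then conclude by noting that multiplication by $k$ permutes the reduced residue system modulo $\ell$. No issues.
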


\begin{proof}
Let $h$ be a primitive root of $\zz_p$. Denote $A'(i, j) = |(1 + h^i L) \cap h^j L|$ and $s'(\ell)$ the sum of $A'\left(i, 2i\right)$ for $i = 1, \ldots, \ell$ with $(i, \ell) = 1$. Since $h L$ also generates $\zz_p^{\times} / L$, $h L = g^k L$ for some $1 \leq k \leq \ell$ with $(k, \ell) = 1$. Then $h^i L = g^{k i} L$ and thus $A'(i, j) = A(k i, k j)$ for $i, j \in \zz$. Recall that $\{1 \leq i \leq \ell \mid (i, \ell) = 1\}$ is a reduced residue system modulo $\ell$. Since $(k, \ell) = 1$, it follows that $\{k i \mid 1 \leq i \leq \ell \text{ and } (i, \ell) = 1\}$ is also a reduced residue system modulo $\ell$. Thus, $$s'(\ell) = \sum_{\substack{1 \leq i \leq \ell, \\ (i, \ell) = 1}} A'(i, 2i) = \sum_{\substack{1 \leq i \leq \ell, \\ (i, \ell) = 1}} A\left(k i, k(2i)\right) = \sum_{\substack{1 \leq i \leq \ell, \\ (i, \ell) = 1}} A(i, 2i) = s(\ell).$$
\end{proof}

As we have mentioned, one of main problems on the theory of cyclotomic numbers is to find explicit formulas for all $A(i, j)$ in terms of solutions of certain Diophantine systems. For example, consider $p \equiv 1 \pmod{3}$ such that $\ell = 3$. It is well-known that the Diophantine system $$\left\{ \begin{array}{c} 4 p = X^2 + 27 Y^2, \\ X \equiv 1 \pmod{3} \end{array} \right.$$ has a solution $(a, b)$ with unique $a$, where $-p < a < p$. \cite[Section~358]{Gau} gives that $$A(0,0) = \frac{1}{9} (p - 8 + a),$$ $$ A(0,1) = A(1,0) = A(2,2) = \frac{1}{18} (2p - 4 - a + 9 b),$$ $$A(0,2) = A(1,1) = A(2, 0) = \frac{1}{18} (2p - 4 - a - 9 b),$$ $$A(1, 2) = A(2, 1) = \frac{1}{9} (p + 1 + a)$$ where the sign of $b$ depends of the choices of primitive roots. In this case, $A(1, 2) = A(2, 1) > 0$ and thus $$s(3) = A(1,2) + A(2, 4) = A(1,2) + A(2,1) >0,$$ while $A(2, 4) = A(2, 1)$ because $4 \equiv 1 \pmod{\ell}$. 
However, the formulas for large $\ell$ will be extremely complicated and hard to find. For instance, the formulas for $\ell = 11$ are presented in \cite{LeoWil}. But it is not obvious whether $s(11) \neq 0$ or not for $p \equiv 1 \pmod{11}$. See also Example~\ref{exmp4.1}. \\

We deduce the following useful properties. 

\begin{lem}
\label{lem4.1}
For $i, j$ modulo $\ell$, $$A(i, j) = A(-i, j-i) \quad \text{and} \quad A(i, j) = A(j, i).$$
\end{lem}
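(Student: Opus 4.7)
The plan is to reformulate both sides of each identity as a count of solution pairs and then produce explicit bijections. Concretely, every $z \in (1+g^i L)\cap g^j L$ determines unique $x = g^{-i}(z-1) \in L$ and $y = g^{-j}z \in L$ satisfying $1 + g^i x = g^j y$, so
\[
A(i,j) = \bigl|\{(x,y) \in L \times L : 1 + g^i x = g^j y\}\bigr|.
\]
With this reformulation, each identity reduces to exhibiting a bijection between the solution sets of the two defining equations.

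For the first identity $A(i,j) = A(-i, j-i)$, I would take a solution $(x,y)$ to $1 + g^i x = g^j y$ and multiply both sides by $g^{-i}x^{-1}$, which gives $1 + g^{-i}x^{-1} = g^{j-i}(x^{-1}y)$. Since $L$ is a subgroup of $\zz_p^\times$, both $x^{-1}$ and $x^{-1}y$ lie in $L$, so the pair $(x^{-1}, x^{-1}y)$ is a solution to the equation defining $A(-i, j-i)$. A direct check shows that $(x,y)\mapsto (x^{-1}, x^{-1}y)$ is an involution, hence a bijection between the two solution sets.

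For the second identity $A(i,j) = A(j,i)$, I would rewrite $1 + g^i x = g^j y$ in the form $1 + g^j(-y) = g^i(-x)$. The decisive input is that $-1 \in L$ by definition of $L = \langle -1, 2 \rangle$, which forces $-x, -y \in L$ whenever $x, y \in L$; the map $(x,y)\mapsto(-y,-x)$ is then an involution between the solution sets for $A(i,j)$ and $A(j,i)$. No substantial obstacle is anticipated, since both identities follow by direct manipulation of the defining equation; the only structural input beyond group arithmetic in $L$ is the containment $-1 \in L$, which is built into the definition of $L$ and is precisely what underlies the symmetry $A(i,j) = A(j,i)$ (a classical identity valid for cyclotomic numbers of order $\ell$ exactly when $-1$ is an $\ell$-th power).
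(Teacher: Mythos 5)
Your proof is correct and follows essentially the same route as the paper: your map $(x,y)\mapsto(x^{-1},x^{-1}y)$ is exactly the paper's bijection $\alpha\mapsto 1+(\alpha-1)^{-1}$ on $(1+g^iL)\cap g^jL$ written in terms of solution pairs, and $(x,y)\mapsto(-y,-x)$ is the paper's $\alpha\mapsto 1-\alpha$, with the same appeal to $-1\in L$. No issues.
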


\begin{proof}
For $\alpha \in (1 + g^i L) \cap g^j L$, write $\alpha = 1 + g^i a = g^j b$ for some $a, b \in L$. One has $1 + (\alpha - 1)^{-1} = 1 + g^{-i} a^{-1} \in 1 + g^{-i} L$. Moreover, $1 + (\alpha - 1)^{-1} = (g^i a + 1) g^{-i} a^{-1} = (g^{j} b) (g^{-i} a^{-1}) = g^{j-i} (b a^{-1}) \in g^{j-i} L$. This gives a map $\varphi$ from $(1 + g^i L) \cap g^j L$ into $(1 + g^{-i} L) \cap g^{j-i} L$ given by $\varphi(\alpha) = 1 + (\alpha - 1)^{-1}$. One can deduce that $\varphi$ is a bijection. 
Hence, the first equality holds.  

For the second equality, observe that $1 - \alpha = 1 + g^j (-b) = g^i (-a) \in (1 + g^j L) \cap g^i L$ as $-1 \in L$. Then $\alpha \mapsto 1 - \alpha$ gives a bijection from $(1 + g^i L) \cap g^j L$ onto $(1 + g^j L) \cap g^i L$. 
\end{proof}

\section{Extended Cyclotomic Matrices}
\label{sec3}

The modulo relation and properties of cyclotomic numbers lead us to an interesting matrix. This will provide a new insight to the conflict-avoiding code problem. In this section, we will be able to easily answer Conjecture~\ref{conjB} for small $\ell$.

For a given primitive root of $\zz_p$, the {\it cyclotomic matrix} is an $\ell$-by-$\ell$ matrix defined as follows $$(A(i, j))_{\ell \times \ell} \quad \text{for } i, j = 0, 1, \ldots, \ell-1.$$ 
It is convenient to give a variation of the cyclotomic matrix. We define the {\it extended cyclotomic matrix} $B = (b_{i, j})_{\ell \times \ell}$ as follows $$b_{i, j} = \left\{ \begin{array}{ll} A(0, 0) + 1 & \text{if $i = j = 0$;} \\ A(i, j) & \text{otherwise}. \end{array} \right.$$ 
For $i, j$ modulo $\ell$, Lemma~\ref{lem4.1} gives the relation for entries: \begin{align} b_{i, j} = b_{-i, j-i} = b_{j-i, -i} = b_{i-j, -j} = b_{-j, i-j} = b_{j, i}.\label{eq5}\end{align} 
For convenience, the row $(b_{i,0}, b_{i, 1}, \ldots, b_{i, \ell-1})$ of $B$ is said to be {\it indexed by $i$} for $i = 0, \ldots, \ell-1$. 
This matrix $B$ has following nice properties.

\begin{lem}
\label{lem3.3}
Let $B$ be the extended cyclotomic matrix as above. 
\begin{enumerate}
\item[(i)]
$B$ is symmetric with nonnegative integer entries and $b_{0,0} > 0$.
\item[(ii)] 
The diagonal entries after $b_{0,0}$ is the reverse of the row indexed by $0$\textup{:} $$(b_{1,1}, b_{2,2}, \ldots, b_{\ell-1, \ell-1}) = (b_{0, \ell-1}, b_{0, \ell-2}, \ldots, b_{0, 1}).$$
\item[(iii)]
For $1 \leq i \leq \ell-1$, the row indexed by $\ell - i$ is shifted $i$ times to the left by the row indexed by $i$\textup{:} $$(b_{\ell-i, 0}, b_{\ell-i, 1}, \ldots, b_{\ell-i, \ell-1}) = (b_{i, i}, b_{i, i+1}, \ldots, b_{i, \ell-1}, b_{i, 0}, \ldots, b_{i, i-1})$$
\item[(iv)]
The trace, the sum of each row and the sum of each column are all $|L| = \frac{p-1}{\ell}$.
\end{enumerate}
\end{lem}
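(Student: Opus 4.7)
The plan is that all four statements fall out of two ingredients: the coset description $A(i,j) = |(1+g^iL)\cap g^jL|$ recorded in Section~\ref{sec2}, and the six-fold symmetry
$$b_{i,j} = b_{-i,j-i} = b_{j-i,-i} = b_{i-j,-j} = b_{-j,i-j} = b_{j,i}$$
from equation~\eqref{eq5} (a direct consequence of Lemma~\ref{lem4.1}). The only delicate bookkeeping is the $+1$ correction at $b_{0,0}$, which is designed precisely to absorb the element $0 = 1 + (-1) \in 1 + L$ (available because $-1 \in L$); I would flag this up front and then verify that once this correction is in place every other identity becomes clean.

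For (i), symmetry is immediate from $b_{i,j} = b_{j,i}$, non-negativity is by definition, and $b_{0,0} = A(0,0) + 1 \geq 1$. For (ii), I would instantiate Lemma~\ref{lem4.1} as $A(i,i) = A(-i,0) = A(0,-i) = A(0,\ell-i)$; since $i$ and $\ell-i$ are both nonzero for $1 \leq i \leq \ell-1$, no entry involved is the corner $b_{0,0}$, so the identity passes from $A$ to $B$ directly. For (iii), the same symmetry in the form $A(-i,k) = A(i,k+i)$ yields $b_{\ell-i,j} = b_{i,j+i}$ for all $j$, and reading the row off for $j = 0, 1, \ldots, \ell-1$ produces exactly the cyclic left-shift claimed.

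For (iv), I would fix $i$ and use that $\{g^jL\}_{j=0}^{\ell-1}$ partitions $\zz_p^{\times}$, so
$$\sum_{j=0}^{\ell-1} A(i,j) = \bigl|(1 + g^iL) \cap \zz_p^{\times}\bigr| = |L| - \bigl|(1 + g^iL) \cap \{0\}\bigr|.$$
The last intersection is nonempty iff $-1 \in g^iL$, i.e.\ iff $g^iL = L$ (using $-1 \in L$), which happens iff $i \equiv 0 \pmod \ell$. Hence $\sum_j A(i,j) = |L|$ for $i \neq 0$ and $|L| - 1$ for $i = 0$; in both cases the $B$-row sums to $|L|$ once the $+1$ at $b_{0,0}$ is included. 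Column sums then agree by the symmetry already established in (i), and the trace can be computed either directly or, more elegantly, by invoking (ii) to rewrite $\sum_i b_{i,i} = b_{0,0} + \sum_{i=1}^{\ell-1} b_{0,\ell-i} = 1 + \sum_{j=1}^{\ell-1} A(0,j) = 1 + (|L|-1) = |L|$.

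I do not expect a genuine obstacle here; the content of the lemma is structural rather than deep. The one place where a reader can slip is confusing the cyclotomic number $A(0,0)$ with the matrix entry $b_{0,0}$, so the main care in writing the proof is to invoke relation~\eqref{eq5} only on indices away from the corner, and to treat the $(0,0)$ entry separately at each step using the explicit formula $b_{0,0} = A(0,0) + 1$.
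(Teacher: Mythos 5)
Your proposal is correct and follows essentially the same route as the paper: parts (i)--(iii) are read off from relation~\eqref{eq5}, and part (iv) is obtained by summing $A(i,j)$ over $j$ using the partition of $\zz_p^{\times}$ into the cosets $g^jL$, with the $+1$ at $b_{0,0}$ compensating for $0 = 1+(-1) \in 1+L$. Your explicit check that the corner entry never interferes with \eqref{eq5} in (ii) and (iii) is a small added care the paper leaves implicit, but the argument is the same.
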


\begin{proof}
(i), (ii) and (iii) are obvious from (\ref{eq5}) that $b_{i, j} = b_{j, i}$, $b_{i, i} = b_{0, \ell-i}$ and $b_{\ell - i, j} = b_{i, j + i}$.

For (iv), let $g$ be the given primitive root. Then the group $\zz_p^{\times}$ is the disjoint union of $g^0 L, g^1 L, \ldots, g^{\ell-1} L$. Since $1 \not\in 1 + L$, the sum of row indexed by $0$ is $$\sum_{j = 0}^{\ell-1} b_{0,j} = 1 + \sum_{j = 0}^{\ell-1} A(0, j) = 1 + \sum_{j=0}^{\ell-1} \left|(1 + L) \cap g^j L \right| = 1 + \left| (1 + L) \cap \zz_p^{\times} \right| = |L|.$$ Moreover, the sum of row indexed by $i$ for $i \neq 0$ is $$\sum_{j=0}^{\ell-1} b_{i, j} = \sum_{j=0}^{\ell-1} A(i, j) = \sum_{j=0}^{\ell-1} \left|(1 + g^i L) \cap g^j L \right| = \left| (1 + g^i L) \cap \zz_p^{\times} \right| = |L|.$$ The trace of $B$ is also $|L|$ by (ii). Finally, the sum of each column is $|L|$ by (i).
\end{proof}

Recall the sum $s(\ell)$ and we have $$s(\ell) = \sum_{\substack{1 \leq i \leq \ell, \\ (i, \ell) = 1}} b_{i, 2i}.$$ The relations~(\ref{eq5}) and Lemma~\ref{lem3.3} lead us to answer that $s(\ell) > 0$ for small $\ell$.

\begin{prop}
\label{prop3.1}
For $\ell = 3, 4, 5$, we have $s(\ell) > 0$.
\end{prop}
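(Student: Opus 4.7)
The plan is to exploit the symmetry relations~\eqref{eq5} together with the structural identities of Lemma~\ref{lem3.3} in order to express $s(\ell)=\sum_{(i,\ell)=1}b_{i,2i}$ in each of these three cases as a short linear combination of entries of $B$, and then collapse that combination against the row-sum identity of Lemma~\ref{lem3.3}(iv) until only $b_{0,0}$---which is positive by Lemma~\ref{lem3.3}(i)---survives, up to other nonnegative terms.

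For $\ell=3$ the only indices coprime to $3$ are $1$ and $2$, and the symmetry $b_{i,j}=b_{j,i}$ gives immediately $s(3)=2b_{1,2}$. I would then subtract the row-$0$ sum from the row-$1$ sum, after using Lemma~\ref{lem3.3}(ii) and symmetry to rewrite $b_{1,0}$ and $b_{1,1}$ in terms of row-$0$ entries; what survives is $b_{1,2}=b_{0,0}$, so $s(3)=2b_{0,0}>0$. For $\ell=4$, Lemma~\ref{lem3.3}(iii) applied to rows $1$ and $3$ lets me replace $b_{3,2}$ by $b_{1,3}$, so $s(4)=b_{1,2}+b_{1,3}$, and the same ``row-$1$ minus row-$0$'' trick reduces this to $b_{0,0}+b_{0,2}>0$.

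The main step is $\ell=5$. Reducing indices modulo $5$ yields $s(5)=b_{1,2}+b_{2,4}+b_{3,1}+b_{4,3}$, and repeated use of symmetry together with the cyclic-shift relation of Lemma~\ref{lem3.3}(iii) applied to rows $3$ and $4$ collapses the four summands into just two distinct values, giving $s(5)=2(b_{1,2}+b_{2,3})$. A single row-sum identity no longer suffices here, so I will add the row-$1$ and row-$2$ sum identities, use Lemma~\ref{lem3.3}(ii) and the collapses already established to rewrite the off-diagonal entries $b_{1,0},b_{1,1},b_{2,0},b_{2,2}$ in terms of row-$0$ entries, $b_{1,2}$, or $b_{2,3}$, and finally subtract the row-$0$ sum to obtain the clean identity $3(b_{1,2}+b_{2,3})=|L|+b_{0,0}$. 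Dividing by $3$ gives $s(5)=\tfrac{2}{3}(|L|+b_{0,0})>0$.

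The argument is finite bookkeeping in each case, and the only real subtlety is the $\ell=5$ step: combining \emph{two} row-sum identities rather than one, and carefully tracking which off-diagonal entries of rows $1$ and $2$ become identified under the shift relations, is precisely what allows the reduction to the two unknowns $b_{1,2}$ and $b_{2,3}$. This also foreshadows why the purely matrix-structural strategy begins to break down for larger $\ell$, and why the square-counting ideas of Section~\ref{sec4} are eventually needed.
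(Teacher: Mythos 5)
Your proposal is correct and takes essentially the same route as the paper: reduce the entries of the extended cyclotomic matrix via the relations~\eqref{eq5} and Lemma~\ref{lem3.3}(i)--(iii), then play the row-sum identity of Lemma~\ref{lem3.3}(iv) against row $0$ to force $s(\ell)$ to be positive (your identities $s(3)=2b_{0,0}$, $s(4)=b_{0,0}+b_{0,2}$, and $3(b_{1,2}+b_{2,3})=|L|+b_{0,0}$ all check out, using $b_{2,3}=b_{1,3}$). The only cosmetic difference is that you derive explicit positive formulas for $s(\ell)$, whereas the paper argues by contradiction from $s(\ell)=0$ down to $b_{0,0}=0$.
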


\begin{proof}
Using (\ref{eq5}) and (i), (ii), (iii) of Lemma~\ref{lem3.3}, we make the matrix $B$ less complicated. 
For convenience, we denote $r_i$ by the row indexed by $i$.

For $\ell = 3$, $$B = \left( \begin{matrix} b_{0,0} & b_{0,1} & b_{0,2} \\ b_{1,0} & b_{1,1} & b_{1,2} \\ b_{2,0} & b_{2,1} & b_{2,2} \end{matrix} \right).$$ By symmetry, $b_{1,0} = b_{0,1}$, $b_{2,0} = b_{0,2}$ and $b_{2,1} = b_{1,2}$. By Lemma~\ref{lem3.3}(ii), $(b_{1,1}, b_{2,2}) = (b_{0,2}, b_{0,1})$. Thus, $$B = \left( \begin{matrix} b_{0,0} & b_{0,1} & b_{0,2} \\ b_{0,1} & b_{0,2} & b_{1,2} \\ b_{0,2} & b_{1,2} & b_{0,1} \end{matrix} \right).$$ By Lemma~\ref{lem3.3}(iv), the sums of $r_0$ and $r_1$ are equal. We have $b_{0,0} = b_{1,2}$. We obtain $s(3) = b_{1,2} + b_{2,4} \geq b_{1,2} = b_{0,0} > 0.$

For $\ell = 4$, $$B = \left( \begin{matrix} b_{0,0} & b_{0,1} & b_{0,2} & b_{0,3} \\ b_{1,0} & b_{1,1} & b_{1,2} & b_{1,3} \\ b_{2,0} & b_{2,1} & b_{2,2} & b_{2,3} \\ b_{3,0} & b_{3,1} & b_{3,2} & b_{3,3} \end{matrix} \right).$$ By Lemma~\ref{lem3.3}(ii), $b_{1,1} = b_{0,3}$. By (\ref{eq5}), we can deduce that $b_{1,3} = b_{1,2}$. So $r_1$ becomes $(b_{1,0}, b_{0,3}, b_{1,2}, b_{1,2})$. According to Lemma~\ref{lem3.3}(iii), $r_2$ satisfies $(b_{2,0}, b_{2,1}, b_{2,2}, b_{2,3}) = (b_{2,2}, b_{2,3}, b_{2,0}, b_{2,1})$ and then it is $(b_{2,0}, b_{2,1}, b_{2,0}, b_{2,1})$. 
Moreover, $r_3 = (b_{0,3}, b_{1,2}, b_{1,2}, b_{1,0})$ by a shifting of $r_1$. Thus, by symmetry, we have 
$$B = \left( \begin{matrix} b_{0,0} & b_{0,1} & b_{0,2} & b_{0,3} \\ b_{0,1} & b_{0,3} & b_{1,2} & b_{1,2} \\ b_{0,2} & b_{1,2} & b_{0,2} & b_{1,2} \\ b_{0,3} & b_{1,2} & b_{1,2} & b_{0,1} \end{matrix} \right).$$ 
Suppose that $s(4) = b_{1,2} + b_{3, 2} = 0$. Then $b_{1,2} = 0$ and $$B = \left( \begin{matrix} b_{0,0} & b_{0,1} & b_{0,2} & b_{0,3} \\ b_{0,1} & b_{0,3} & 0 & 0 \\ b_{0,2} & 0 & b_{0,2} & 0 \\ b_{0,3} & 0 & 0 & b_{0,1} \end{matrix} \right).$$ By Lemma~\ref{lem3.3}(iv), the sums of $r_0$ and $r_1$ are equal. We obtain $b_{0,0} = 0$. This contradicts to $b_{0,0} > 0$. Thus, $s(4) \neq 0$.

For $\ell = 5$, $$B = \left( \begin{matrix} b_{0,0} & b_{0,1} & b_{0,2} & b_{0,3} & b_{0,4} \\ b_{1,0} & b_{1,1} & b_{1,2} & b_{1,3} & b_{1,4} \\ b_{2,0} & b_{2,1} & b_{2,2} & b_{2,3} & b_{2,4} \\ b_{3,0} & b_{3,1} & b_{3,2} & b_{3,3} & b_{3,4} \\ b_{4,0} & b_{4,1} & b_{4,2} & b_{4,3} & b_{4,4} \end{matrix} \right).$$ By Lemma~\ref{lem3.3}(ii), $b_{1,1} = b_{0,4}$ and $b_{2,2} = b_{0,3}$. By (\ref{eq5}), we can deduce that $b_{1,4} = b_{1,2}$ and $b_{2,3} = b_{2,4} = b_{1,3}$. 
Thus, $r_1 = (b_{1,0}, b_{0,4}, b_{1,2}, b_{1,3}, b_{1,2})$ and $r_2 = (b_{2,0}, b_{2,1}, b_{0,3}, b_{1,3}, b_{1,3})$. By Lemma~\ref{lem3.3}(iii), $r_3 = (b_{0,3}, b_{1,3}, b_{1,3}, b_{2,0}, b_{2,1}$ and $r_4 = (b_{0,4}, b_{1,2}, b_{1,3}, b_{1,2}, b_{1,0})$ by shifting $r_2$ and $r_1$, respectively. Hence, by symmetry, we have $$B = \left( \begin{matrix} b_{0,0} & b_{0,1} & b_{0,2} & b_{0,3} & b_{0,4} \\ b_{0,1} & b_{0,4} & b_{1,2} & b_{1,3} & b_{1,2} \\ b_{0,2} & b_{1,2} & b_{0,3} & b_{1,3} & b_{1,3} \\ b_{0,3} & b_{1,3} & b_{1,3} & b_{0,2} & b_{1,2} \\ b_{0,4} & b_{1,2} & b_{1,3} & b_{1,2} & b_{0,1} \end{matrix} \right).$$ Suppose that $s(5) = b_{1,2} + b_{2,4} + b_{3,1} + b_{4,3} = 0$. Then $b_{1,2} = b_{1,3} = 0$ and we have $$B = \left( \begin{matrix} b_{0,0} & b_{0,1} & b_{0,2} & b_{0,3} & b_{0,4} \\ b_{0,1} & b_{0,4} & 0 & 0 & 0 \\ b_{0,2} & 0 & b_{0,3} & 0 & 0 \\ b_{0,3} & 0 & 0 & b_{0,2} & 0 \\ b_{0,4} & 0 & 0 & 0 & b_{01} \end{matrix} \right).$$ By Lemma~\ref{lem3.3}(iv), the sums of $r_0$ and $r_1$ are equal. We obtain $b_{0,0} = 0$, a contradiction. Thus, $s(5) \neq 0$.
\end{proof}

According to Lemma~\ref{lem2.3}, a prime $p$ will satisfy Conjecture~\ref{conjB}, as well as Conjecture~\ref{conjA}, if $s(\ell) \neq 0$. Thus we have the following conclusion on the size of optimal CAC.

\begin{cor}
\label{cor3.9}
Let $p$ be an odd prime such that $4 \nmid o_p(2)$. If $\ell = \left[\zz_p^{\times} : \langle -1, 2 \rangle\right] = 3, 4, 5$, then 
the size of optimal conflict-avoiding codes of length $p$ and weight $3$ is $\frac{p-1-2\ell}{4} + 1$.
\end{cor}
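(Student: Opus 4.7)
My plan is to deduce this as an essentially immediate consequence of Proposition~\ref{prop3.1}, Lemma~\ref{lem2.3}, and the bookkeeping already assembled in the introduction; no genuinely new ingredient is required.

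First I would unpack the hypothesis. Because $4 \nmid o_p(2)$, the definition of $O(p)$ gives $O(p) = (p-1)/|\langle -1,2\rangle| = \ell$, so that
$$M^e(p) = \frac{p-1-2\ell}{4}.$$
For each $\ell \in \{3,4,5\}$ one has $\lfloor \ell/3 \rfloor = 1$, and the upper bound~\eqref{upper bound: FLS} collapses to
$$M(p) \;\le\; \frac{p-1-2\ell}{4} + 1.$$
It therefore suffices to exhibit a single CAC that meets this bound.

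For the matching lower bound I would invoke Proposition~\ref{prop3.1} to obtain $s(\ell) > 0$, and then Lemma~\ref{lem2.3} to conclude that Conjecture~\ref{conjB} holds for $p$. The implication ``Conjecture~\ref{conjB} $\Rightarrow$ Conjecture~\ref{conjA}'' recorded right after the statement of Conjecture~\ref{conjB} then supplies this prime with the data demanded by Conjecture~\ref{conjA}. Since $\lfloor O(p)/3 \rfloor = 1$, only a \emph{single} triple of cosets $(A_1,B_1,C_1)$ together with a representative solution $a_1+b_1=c_1$ in $A_1 \times B_1 \times C_1$ is needed.

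Finally I would run the algorithm of \cite{FuLoShu} exactly as in the worked example for $p=73$. The single nonequi-difference codeword $\mathbf{x}=\{0,a_1,c_1\}$ contributes the difference set $\{\pm a_1,\pm b_1,\pm c_1\}$, which lies in three of the $\ell$ cosets of $L$; inside each coset $g^k L$ one then enumerates equi-difference codewords of the form $\{0,\alpha 2^{2j},\alpha 2^{2j+1}\}$ using the elements not already absorbed into $\Delta(\mathbf{x})$. A routine count shows that this yields exactly $M^e(p)$ equi-difference codewords, so together with $\mathbf{x}$ one obtains a CAC of size $\frac{p-1-2\ell}{4}+1$, matching the upper bound. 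The proof contains no real obstacle: all the genuine content sits in Proposition~\ref{prop3.1}, and the derivation of the corollary is a matter of packaging.
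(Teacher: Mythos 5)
Your proposal is correct and follows essentially the same route as the paper: Proposition~\ref{prop3.1} gives $s(\ell)>0$, Lemma~\ref{lem2.3} converts this into Conjecture~\ref{conjB} and hence Conjecture~\ref{conjA} for $p$, and the algorithm of \cite{FuLoShu} together with the upper bound \eqref{upper bound: FLS} (with $O(p)=\ell$ and $\lfloor \ell/3\rfloor=1$) yields the stated size. The paper treats exactly this packaging as immediate, so no further comment is needed.
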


Corollary~\ref{cor3.9} is also known in \cite[Corollary~4.8]{MaZhaShe} in a different method. 
When $\ell \geq 6$, the above approach is not enough to check whether $s(\ell) \neq 0$. In the next section, we discover more properties on $s(\ell)$.

\section{The number of squares}
\label{sec4}

In the previous section, we have seen that the sum $s(\ell)$ of particular cyclotomic numbers actually answers Conjecture~\ref{conjA} and \ref{conjB} for some $\ell$. In this section, we will see how $s(\ell)$ connects to squares of $\zz_p$ and answer both conjectures for more $\ell$. 
First of all, let us back to the equation $1 + b = c$ in Conjecture~\ref{conjB} where $b \in t L$ and $c \in t^2 L$ for $t \in \zz_p^{\times}$ such that $\zz_p^{\times} / L = \langle t L \rangle$. In this case, $(1 + t L) \cap t^2 L$ is nonempty. Write $b = t \beta$ and $c = t^2 \gamma$ for $\beta, \gamma \in L$. It gives $t^2 \gamma - t \beta - 1 = 0$. Viewing the equation as a quadratic polynomial in $t$, the discriminant provides that $\beta^2 + 4 \gamma$ is a square. Equivalently, $1 + 4 \beta^{-2} \gamma$ is a square. Since $2 \in L$, it follows that $1 + 4 \beta^{-2} \gamma$ is a square in $L$. This leads us to think of squares in $1 + L$. In general, if $1 + \alpha$ is a square for $\alpha \in L$, namely $1 + \alpha = (s-1)^2$ for some $s \in \zz_p^{\times}$, then we have $1 + s (2 \alpha^{-1}) = s^2 \alpha^{-1}$ which gives an element of $(1 + s L) \cap s^2 L$ because $2 \alpha^{-1}, \alpha^{-1} \in L$. Note that $s L = g^i L$ for some $0 \leq i \leq \ell-1$ where $g$ is a given primitive root. It follows that $A(i, 2 i) \neq 0$. Consequently, every square of $1 + L$ (if exists) leads to a nonzero cyclotomic number $A(i, 2 i)$ for some $0 \leq i \leq \ell-1$. We will see $A(i, 2 i) \neq 0$ for some $i \neq 0$. It follows that $s(\ell) \neq 0$ when $\ell$ is prime.


Given a primitive root $g$ of $\zz_p$, we generally consider squares in $1 + g^k L$ for $0 \leq k \leq \ell-1$. 
Fix $k$ and let $r \in \zz_p$ such that $r^2 \in 1 + g^k L$. Then $r^2 - 1 \in g^k L$. Observe that $r^2 - 1 = (r-1) (r+1)$ and $r \neq \pm 1$ since $0 \not\in g^k L$. Thus, $r-1 \in g^i L$ and $r+1 \in g^j L$ for some $i, j$ satisfying $i + j \equiv k \pmod{\ell}$ as $(g^i L) (g^j L) = g^k L$. In particular, $r \in (1 + g^i L) \cap (-1 + g^j L)$. Conversely, if $r' \in (1 + g^i L) \cap (-1 + g^j L)$ and $i + j \equiv k \pmod{\ell}$, then we have ${r'}^2 \in 1 + g^k L$. Let $$R(i, j) = (1 + g^{i} L) \cap (-1+g^{j} L)$$ for integers $i, j$ and let $S_k$ be the set of squares of $1 + g^k L$. The discussion above provides a surjective map $$\begin{array}{cccc} f_k: & \displaystyle \bigcup_{\substack{0 \leq i, j \leq \ell-1, \\ i + j \equiv k \; (\text{mod } \ell)}} R(i, j) & \to & S_k \\ & r & \mapsto & r^2. \end{array}$$ 

\begin{lem}
\label{lem3.5}
Let $R(i, j)$ be as above.
\begin{enumerate}
\item[(i)]
For $0 \leq i, i', j, j' \leq \ell-1$, if either $i \neq i'$ or $j \neq j'$, then $R(i, j) \cap R(i', j') = \varnothing$. Thus, the domain of $f_k$ is a disjoint union of $R(i, j)$.
\item[(ii)]
For $i, j$ modulo $\ell$, $|R(i, j)| = A(i, j).$
\end{enumerate}
\end{lem}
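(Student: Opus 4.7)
The plan is to handle the two parts separately; both reduce to direct coset-counting, and the only nontrivial ingredient is the already-noted fact that $2 \in L$.

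For (i), I would begin by recalling that $\{g^0 L, g^1 L, \ldots, g^{\ell-1} L\}$ is a partition of $\zz_p^{\times}$. If $r \in R(i,j) \cap R(i', j')$, then $r - 1$ belongs to $g^i L \cap g^{i'} L$ and $r + 1$ belongs to $g^j L \cap g^{j'} L$; in particular both $r-1$ and $r+1$ are nonzero, so each lies in a unique coset of $L$. This forces $i \equiv i' \pmod{\ell}$ and $j \equiv j' \pmod{\ell}$, and since the indices range over $\{0, 1, \ldots, \ell-1\}$ we conclude $(i,j)=(i',j')$. The decomposition of the domain of $f_k$ as a disjoint union then follows immediately.

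For (ii), the natural strategy is to exhibit a bijection between $R(i,j)$ and $(1 + g^i L) \cap g^j L$. I would use the affine map $\varphi(r) = (r+1)/2$, which is well defined on $\zz_p$ since $p$ is odd, with inverse $\alpha \mapsto 2\alpha - 1$. The key observation is that $2 \in L$, so each coset $g^i L$ is preserved by the maps $x \mapsto 2x$ and $x \mapsto x/2$. Given $r \in R(i,j)$, write $r = 1 + g^i a = -1 + g^j b$ with $a, b \in L$; then $\varphi(r) = 1 + g^i(a/2) = g^j(b/2)$, exhibiting $\varphi(r) \in (1 + g^i L) \cap g^j L$. The inverse sends $\alpha = 1 + g^i a' = g^j b'$ to $2\alpha - 1 = 1 + g^i(2a') = -1 + g^j(2b') \in R(i,j)$. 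This yields $|R(i,j)| = A(i,j)$.

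There is no real obstacle; the argument is a clean verification. The main point to flag is the role of $2 \in L$, which is precisely what allows the halving/doubling bijection to respect the coset structure and identify $R(i,j)$ with the set counted by $A(i,j)$; without this hypothesis the affine shift would land in a different pair of cosets, and a compensating re-indexing would be necessary.
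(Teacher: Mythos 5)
Your proposal is correct and follows essentially the same route as the paper: part (i) via the disjointness of the cosets $g^iL$, and part (ii) via the bijection $r \mapsto 2^{-1}(1+r)$ with inverse $\alpha \mapsto 2\alpha - 1$, using $2 \in L$ exactly as the paper does. Your write-up is in fact slightly more explicit than the paper's in checking that the map and its inverse land in the claimed sets.
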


\begin{proof}
For (i), when $i \neq i'$, one has $(1 + g^i L) \cap (1 + g^{i'} L) = \varnothing$ because $g^i L \cap g^{i'} L = \varnothing$. Similarly, when $j \neq j'$, one also has $(-1 + g^j L) \cap (-1 + g^{j'} L) = \varnothing$. The result follows.

For (ii), consider the map $$\begin{array}{cccc} \psi : & R(i, j) & \to & (1 + g^i L) \cap g^j L \\ & r & \mapsto & 2^{-1} (1 + r). \end{array}$$ It is easy to see that $\psi$ is injective. For every $s \in (1 + g^i L) \cap g^j L$, $-1 + 2 s \in T(i, j)$ since $2 \in L$. Moreover, $\psi(-1 + 2 s) = s$. Hence, $\psi$ is a bijection and the result follows by the definition of $A(i, j)$. 
\end{proof}

The following lemma provides a connection between a particular sum of cyclotomic numbers and the number of squares.

\begin{lem}
\label{lem3.4}
The number of squares of $1 + L$ is $$\frac{1}{2} + \frac{1}{2} \sum_{i = 0}^{\ell-1} A(i, -i).$$ For $1 \leq k \leq \ell-1$, the number of squares of $1 + g^k L$ is $$\frac{1}{2} \sum_{i = 0}^{\ell-1} A(i, k-i).$$
\end{lem}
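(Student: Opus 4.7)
The plan is to exploit the surjective map $f_k$ constructed just before the lemma and analyze its fibers. By Lemma~\ref{lem3.5}, the domain of $f_k$ is the \emph{disjoint} union of the sets $R(i,j)$ over pairs $(i,j)$ with $0 \leq i,j \leq \ell-1$ and $i+j\equiv k \pmod{\ell}$, and $|R(i,j)|=A(i,j)$. Parametrizing these pairs by $i$ (with $j\equiv k-i\pmod{\ell}$), the size of the domain of $f_k$ equals $\sum_{i=0}^{\ell-1} A(i,k-i)$. Therefore the formulas for $|S_k|$ will follow once I determine the fiber sizes of $f_k$.

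The fiber of $f_k$ over a square $s=r^2\in S_k$ consists of the elements of the domain whose square is $s$, i.e.\ of those elements of the domain lying in $\{r,-r\}$. I will first verify that whenever $r$ is in the domain, so is $-r$: if $r\in R(i,j)$ with $i+j\equiv k$, then $-r-1=-(r+1)\in -g^{j}L=g^{j}L$ and $-r+1=-(r-1)\in g^{i}L$ (using $-1\in L$), so $-r\in R(j,i)$, and $j+i\equiv k$ as well. Hence $f_k^{-1}(s)=\{r,-r\}$, and the fiber has size $2$ unless $r=-r$, i.e.\ $r=0$.

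Next I will treat $0$ separately. The element $0$ lies in the domain of $f_k$ iff $-1\in g^{i}L$ and $1\in g^{j}L$ for some $(i,j)$ with $i+j\equiv k$, which (since $\pm1\in L=g^0L$) forces $i=j=0$ and $k\equiv 0\pmod\ell$. Correspondingly, $0=0^2$ is a square in $1+g^kL$ iff $-1\in g^kL$, i.e.\ iff $k\equiv 0$. So for $1\leq k\leq \ell-1$ the map $f_k$ is everywhere $2$-to-$1$, giving
\[
|S_k|\;=\;\tfrac{1}{2}\sum_{i=0}^{\ell-1}A(i,k-i),
\]
while for $k=0$ exactly one fiber (the one over $0$) has size $1$ and all others have size $2$, so
\[
|S_0|\;=\;\tfrac{1}{2}\bigl(\textstyle\sum_{i=0}^{\ell-1}A(i,-i)-1\bigr)+1\;=\;\tfrac{1}{2}+\tfrac{1}{2}\sum_{i=0}^{\ell-1}A(i,-i).
\]

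The argument is essentially a bookkeeping exercise once the map $f_k$ is in place; the only real subtlety is remembering that $0$ can be a square in $1+L$ (because $-1\in L$), which accounts for the extra $\tfrac{1}{2}$ in the first formula. I do not anticipate a genuine obstacle, only the need to handle the $k=0$ case with care.
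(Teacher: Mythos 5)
Your proposal is correct and follows essentially the same route as the paper: both rest on the surjection $f_k$, the disjoint decomposition and the identity $|R(i,j)|=A(i,j)$ from Lemma~\ref{lem3.5}, and the observation that negation carries $R(i,j)$ to $R(j,i)$ with $0$ as the unique fixed point, occurring only when $k=0$. The only difference is bookkeeping — you count fiber sizes of $f_k$ directly, whereas the paper sums the image sizes of the symmetric pieces $R(i,i)$ and $R(i,j)\cup R(j,i)$ — and the two computations are equivalent.
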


\begin{proof}
Let $R(i, j)$, $S_k$ and $f_k$ be as above. Note that if $0 \in R(i, j)$, then $0 \in 1 + g^i L$ and $0 \in -1 + g^j L$. As $1, -1 \in L$, we deduce that $0 \in R(i, j)$ if and only if $i = j = 0$. Recall from Lemma~\ref{lem3.5}(i) that the domain of $f_k$ is a disjoint union of $R(i, j)$ for $0 \leq i, j \leq \ell-1$ and $i + j \equiv k \pmod{\ell}$. We discuss into two cases that $i = j$ and $i \neq j$.

Case 1: $i = j$. Clearly, we have $R(i, i) = - R(i, i)$. Moreover, $f_k(r) = r^2 = f_k(-r)$. Thus, the restriction of $f_k$ to $R(i,i) \setminus \{0\}$ is two-to-one. If $0 \in R(i, i)$, then $i = 0$ and this holds only for $k = 0$. So the image of $R(i, i)$ under $f_0$ has the size $\frac{1}{2} + \frac{1}{2} A(0,0)$ if $i = 0$ and the size $\frac{1}{2} A(i, i)$ if $i \neq 0$. For $k = 1, \ldots, \ell-1$, the image of $R(i, i)$ under $f_k$ has the size $\frac{1}{2} A(i, i)$.

Case 2: $i \neq j$. In this case, $R(i, j) \neq R(j, i)$ and $R(j, i) = - R(i, j)$. Thus, the restriction of $f_k$ to $R(i, j) \cup R(j, i)$ is also two-to-one. It follows that the image of $R(i, j) \cup R(j, i)$ under $f_k$ has the size $\frac{1}{2} (A(i, j) + A(j, i))$.

Since $f_k$ is surjective, Case~1 and 2 lead to $$|S_0| = \frac{1}{2} + \frac{1}{2} \sum_{\substack{0 \leq i, j \leq \ell-1, \\ i + j \equiv 0 \; (\text{mod } \ell)}} A(i, j) \quad \text{and} \quad |S_k| = \frac{1}{2} \sum_{\substack{0 \leq i, j \leq \ell-1, \\ i + j \equiv k \; (\text{mod } \ell)}} A(i, j)$$ for $k = 1, \ldots, \ell-1$. Finally, each $i$ leads to a unique $j$ such that $j \equiv k - i \pmod{\ell}$. The proof is completed.
\end{proof}

\begin{exmp}
Let $p = 31$. Then $L = \{\pm 1, \pm 2, \pm 2^2, \pm 2^3, \pm 2^4\}$ and $\ell = 3$. Choose $g = 3$ a primitive root. Then the sets $S_k$ of squares of $1 + g^k L$ are as follows: $$S_0 = \{0, 2, 5, 9, 16, 28\}, \quad S_1 = \{4, 7, 8, 18, 20, 25\}, \quad S_2 = \{10, 14, 19\}.$$ On the other hand, the cyclotomic matrix (Section~\ref{sec3}) is $$\left( \begin{matrix} A(0,0) & A(0,1) & A(0,2) \\ A(1,0) & A(1,1) & A(1,2) \\ A(2,0) & A(2,1) & A(2,2) \end{matrix} \right) = \left( \begin{matrix} 3 & 4 & 2 \\ 4 & 2 & 4 \\ 2 & 4 & 4 \end{matrix} \right).$$ Indeed, $|S_0| = \frac{1}{2} + \frac{1}{2} (A(0,0) + A(1,2) + A(2, 1))$, $|S_1| = \frac{1}{2} (A(0,1) + A(1,0) + A(2,2))$ and $|S_2| = \frac{1}{2} (A(0,2) + A(1,1) + A(2, 0))$.
\end{exmp}




Let us concentrate on $1 + L$. According to Lemma~\ref{lem4.1}, $A(i, - i) = A(-i, i) = A(i, 2 i)$. Thus, the number of squares of $1 + L$ is also $$\frac{1}{2} + \frac{1}{2} \sum_{i = 0}^{\ell-1} A(i, 2i).$$ Recall that $$s(\ell) = \sum_{\substack{1 \leq i \leq \ell, \\ (i, \ell) = 1}} A(i, 2i).$$ We have the following conclusion. 

\begin{prop}
\label{prop4.2}
If $\ell$ is an odd prime, then the number of squares of $1 + L$ is $\frac{1}{2} (1 + A(0, 0) + s(\ell))$.
\end{prop}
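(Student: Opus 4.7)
The plan is to combine Lemma~\ref{lem3.4} with the symmetry relations of Lemma~\ref{lem4.1} and then exploit the hypothesis that $\ell$ is prime to identify the relevant sum with $A(0,0) + s(\ell)$.

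First I would recall from Lemma~\ref{lem3.4} that the number of squares of $1 + L$ equals
\[
\tfrac{1}{2} + \tfrac{1}{2} \sum_{i=0}^{\ell-1} A(i, -i).
\]
Next I would apply Lemma~\ref{lem4.1} twice: by the symmetry $A(i,j) = A(j,i)$ we get $A(i,-i) = A(-i, i)$, and then by $A(i,j) = A(-i, j-i)$ applied to the pair $(-i, i)$ we obtain $A(-i, i) = A(i, 2i)$. Hence each term in the sum can be rewritten and the total count becomes
\[
\tfrac{1}{2} + \tfrac{1}{2} \sum_{i=0}^{\ell-1} A(i, 2i).
\]

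Finally, I would use the hypothesis that $\ell$ is an odd prime. Then for $1 \leq i \leq \ell-1$ we automatically have $(i,\ell) = 1$, while $i = \ell$ contributes $A(\ell, 2\ell) = A(0,0)$ and is excluded from $s(\ell)$ because $(\ell,\ell) \neq 1$. Consequently,
\[
\sum_{i=0}^{\ell-1} A(i, 2i) = A(0,0) + \sum_{i=1}^{\ell-1} A(i, 2i) = A(0,0) + s(\ell),
\]
and substituting back yields the claimed formula $\tfrac{1}{2}(1 + A(0,0) + s(\ell))$.

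There is essentially no obstacle here: the statement is a bookkeeping consequence of the previously established Lemmas~\ref{lem3.4} and \ref{lem4.1}, together with the elementary observation that primality of $\ell$ makes every nonzero residue modulo $\ell$ coprime to $\ell$. The only point requiring a moment of care is correctly invoking both parts of Lemma~\ref{lem4.1} to convert $A(i,-i)$ into $A(i,2i)$, so that the remaining sum matches the definition of $s(\ell)$ once the $i=0$ term is peeled off.
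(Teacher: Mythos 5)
Your proof is correct and follows essentially the same route as the paper: apply Lemma~\ref{lem3.4}, convert $A(i,-i)$ to $A(i,2i)$ via the two relations of Lemma~\ref{lem4.1}, and use primality of $\ell$ to identify $\sum_{i=1}^{\ell-1} A(i,2i)$ with $s(\ell)$ after peeling off the $i=0$ term. The paper presents exactly this computation in the paragraph preceding the proposition rather than as a formal proof.
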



Note that $1 + L$ contains at least one square, says $0$. When $\ell$ is an odd prime, Proposition~\ref{prop4.2} provides that we will have $s(\ell) \neq 0$ if $A(0, 0)$ is small enough compared to the number of squares of $1 + L$. Let us see an example below.

\begin{exmp}
\label{exmp4.1}
Let $p = 331$. Then $|L| = 30$ and $\ell = 11$. One can check that $A(0,0) = |(1+L) \cap L| = 5$. Thus, $1 + L$ has $3 + \frac{s(11)}{2}$ squares by Proposition~\ref{prop4.2}. Note that $1 + L$ contains at least $4$ squares such as $0$, $1 + 4 = 233^2$, $1-4 = 63^2$ and $1+8 = 3^2$. We have $s(11) \neq 0$.  
Indeed, if we pick a primitive root $g = 3$, then one can compute $A(1, 2) = 3$. This also illustrates $s(11) \neq 0$ since it is independent on the choices of primitive roots by Lemma~\ref{prop2.1}.
\end{exmp}

We now focus on $A(0,0)$ and the number of squares of $1 + L$. In \cite{BetHirKomMun}, the authors study upper bounds of cyclotomic numbers. They determine $A(i, j)$ by computing the rank of certain matrix with entries in $\zz_p$. Estimating the rank provides an upper bound of $A(i, j)$ for general $\ell$. We use their result for $A(0, 0)$ below.

\begin{lem}[{\cite[Theorem~1.1(iii)]{BetHirKomMun}}]
\label{lem4.2}
If $\ell \geq 3$, then $$A(0, 0) \leq \left\lceil \frac{p-1}{2 \ell} \right\rceil - 1$$ where $\lceil \cdot \rceil$ denotes the smallest integer larger than or equal to a given number.
\end{lem}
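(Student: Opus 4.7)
My plan is to bound $A(0,0)$ via Jacobi sums. Let $\chi$ be a multiplicative character of $\zz_p^\times$ of exact order $\ell$, so that $L = \ker \chi$. Using the orthogonality identity $\sum_{k=0}^{\ell-1}\chi^k(x) = \ell$ if $x \in L$ and $0$ otherwise, the indicator $\mathbf{1}_L$ of $L$ on $\zz_p^\times$ equals $\tfrac{1}{\ell}\sum_{k=0}^{\ell-1}\chi^k$. Writing $A(0,0) = \sum_{a \in \zz_p \setminus\{0,-1\}} \mathbf{1}_L(a)\,\mathbf{1}_L(a+1)$ and substituting,
$$\ell^2\, A(0,0) \;=\; \sum_{k,j=0}^{\ell-1} J(\chi^k,\chi^j), \qquad J(\chi^k,\chi^j) \;=\; \sum_{\substack{a \in \zz_p \\ a \neq 0, -1}} \chi^k(a)\,\chi^j(a+1).$$

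Next I would isolate the low-frequency contributions. The term $(k,j)=(0,0)$ contributes $p-2$; the $2(\ell-1)$ terms in which exactly one coordinate is zero each equal $-1$ (via $\sum_{b \in \zz_p^\times}\chi^j(b) = 0$ for $j \not\equiv 0$); and the $(\ell-1)$ terms with $k \neq 0$ and $k+j \equiv 0 \pmod \ell$ each equal $-\chi^k(-1) = -1$, since $-1 \in L$. Applying Weil's bound $|J(\chi^k,\chi^j)| \leq \sqrt p$ to the remaining $(\ell-1)(\ell-2)$ Jacobi sums then yields
$$\ell^2\, A(0,0) \;=\; (p + 1 - 3\ell) + E, \qquad |E| \;\leq\; (\ell-1)(\ell-2)\sqrt p.$$

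Finally, I would compare with $\lceil (p-1)/(2\ell)\rceil - 1$: the main term $(p+1-3\ell)/\ell^2$ is strictly less than $(p-1)/(2\ell)$ for $\ell \geq 3$, with a gap of order $p/\ell$, and the Weil error $|E|/\ell^2$ is smaller than this gap once $p \gtrsim 4\ell^2$. The main obstacle is the complementary range $\ell \asymp \sqrt p$, in which Weil's bound is too weak to close the gap. In that regime I would follow the matrix-rank strategy of \cite{BetHirKomMun}, who express $A(0,0)$ in terms of the $\zz_p$-rank of an explicit matrix built from coset indicators of $L$ and then estimate this rank linear-algebraically; extending that estimate uniformly across all $p$ and $\ell \geq 3$ is what I expect to be the hardest technical step of a complete proof.
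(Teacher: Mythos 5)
First, note that the paper itself offers no proof of this lemma: it is imported verbatim from \cite[Theorem~1.1(iii)]{BetHirKomMun} and used as a black box, so there is no internal argument to compare yours against. Your Jacobi-sum computation is correct as far as it goes. The evaluation of the degenerate terms is right (the $(0,0)$ term gives $p-2$, the $2(\ell-1)$ half-trivial terms and the $\ell-1$ terms with $k+j\equiv 0$ each give $-1$, using $-1\in L=\ker\chi$), so indeed $\ell^2 A(0,0)=p+1-3\ell+E$ with $|E|\le(\ell-1)(\ell-2)\sqrt{p}$. Since $\frac{p-1}{2\ell}=\frac{|L|}{2}$ is an integer here, the target is the strict inequality $A(0,0)<\frac{p-1}{2\ell}$, and your estimate delivers it precisely when $2(\ell-1)(\ell-2)\sqrt{p}<(\ell-2)p+5\ell-2$, i.e.\ essentially when $\ell<\tfrac{1}{2}\sqrt{p}+1$.

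That restriction is the genuine gap, and it is not a borderline technicality in this paper's setting. Here $|L|$ has order $\mathrm{lcm}(2,o_p(2))$, which can be as small as $O(\log p)$, so $\ell=(p-1)/|L|$ can be of size $p/\log p$ --- far beyond the reach of square-root cancellation, where the lemma becomes a statement about the additive structure of a very small multiplicative subgroup and character sums give nothing. Indeed, the paper's own showcase example has $p=1229241823$ and $\ell=18307$, while $\tfrac{1}{2}\sqrt{p}\approx 17530$, so your argument already fails (if only just) for the one explicit instance the authors exhibit. You have correctly diagnosed where the difficulty lies and pointed at the rank-over-$\zz_p$ method of \cite{BetHirKomMun}, but deferring ``the hardest technical step'' to that reference means your proposal, as written, establishes only the easy range of the lemma; the uniform bound for all $\ell\ge 3$ is exactly the content of the cited theorem and is not recovered by the Weil-bound route.
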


In our situation, $\frac{p-1}{\ell} = |L|$ and $|L|$ is even as $-1 \in L$. Thus, Lemma~\ref{lem4.2} implies $$A(0,0) \leq \frac{|L|}{2} - 1.$$ 

We study the number of squares of $1 + L$ in a different way from cyclotomic numbers. Denote $\left( \frac{\cdot}{p} \right)$ the {\it Legendre symbol} by $$\left( \frac{x}{p} \right) = x^{\frac{p-1}{2}} = \left\{ \begin{array}{cl} 0 & \text{if } x = 0; \\ 1 & \text{if $x \neq 0$ is a square}; \\ -1 & \text{if $x \neq 0$ is not a square,} \end{array}\right.$$ for $x \in \zz_p$. Then $x$ is a square in $\zz_p$ if and only if $\left( \frac{x}{p} \right) = 0, 1$. Thus, for every $x, y \in \zz_p$, the collection $\{x, y, x y\}$ must contain at least one square because of the multiplication $$\left( \frac{x}{p} \right) \cdot \left( \frac{y}{p} \right) = \left( \frac{x y}{p} \right)$$ holds (see also \cite[Chapter~9]{Bur}). We have the following conclusion.

\begin{lem}
\label{lem4.3}
If $x, y \in \zz_p$ and $x y \neq 0$, then $$\frac{1}{2}\left(1 + \left( \frac{x}{p} \right) \right) + \frac{1}{2} \left( 1 + \left( \frac{y}{p} \right) \right) + \frac{1}{2} \left( 1 + \left( \frac{x y}{p} \right) \right) \geq 1.$$
\end{lem}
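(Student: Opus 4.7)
The plan is to observe that each of the three summands is either $0$ or $1$, so the inequality is really the combinatorial statement that among $\{x,\,y,\,xy\}$ at least one of the elements is a nonzero square. This follows immediately from the multiplicativity of the Legendre symbol.

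In detail, since $xy\neq 0$ we have $x,y\in\zz_p^\times$, so $a:=\left(\tfrac{x}{p}\right)$ and $b:=\left(\tfrac{y}{p}\right)$ both lie in $\{+1,-1\}$. Multiplicativity of the Legendre symbol (e.g.\ \cite[Chapter~9]{Bur}) gives
\[
\left(\tfrac{xy}{p}\right)=\left(\tfrac{x}{p}\right)\left(\tfrac{y}{p}\right)=ab.
\]
Thus the left-hand side of the claimed inequality equals
\[
\tfrac{1}{2}\bigl(1+a\bigr)+\tfrac{1}{2}\bigl(1+b\bigr)+\tfrac{1}{2}\bigl(1+ab\bigr)=\tfrac{1}{2}\bigl(3+a+b+ab\bigr)=\tfrac{1}{2}\bigl(2+(1+a)(1+b)\bigr).
\]
Since $a,b\in\{+1,-1\}$, each factor $1+a$ and $1+b$ is nonnegative, so $(1+a)(1+b)\geq 0$ and the expression is at least $1$. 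Equivalently, the triple $(a,b,ab)$ cannot equal $(-1,-1,-1)$, because $a=b=-1$ forces $ab=+1$; hence at least one of the three terms $\tfrac{1}{2}(1+\epsilon)$ equals $1$ while the other two are nonnegative.

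There is no real obstacle: the content of the lemma is simply that two non-squares multiply to a square, which is immediate from the multiplicativity of $\left(\tfrac{\cdot}{p}\right)$. The only care needed is to invoke the hypothesis $xy\neq 0$ so that no Legendre symbol vanishes and the convenient rewriting via $(1+a)(1+b)\geq 0$ is available.
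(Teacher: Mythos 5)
Your proof is correct and follows essentially the same route as the paper, which derives the lemma from the multiplicativity of the Legendre symbol (the observation that $x$, $y$, $xy$ cannot all be non-squares). The explicit rewriting as $\tfrac{1}{2}\bigl(2+(1+a)(1+b)\bigr)$ is a clean way of packaging the same case check.
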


We have a lower bound of the number of squares of $1 + L$ as follows.

\begin{lem}
\label{lem4.4}
The number of squares of $1 + L$ is greater than $\frac{1}{4} |L|$.
\end{lem}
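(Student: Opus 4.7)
The plan is to exploit Lemma~\ref{lem4.3} by producing, for each admissible $\alpha \in L$, a triple of elements in $1 + L$ containing at least one square, and then to extract the bound by a double counting argument.

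First I would observe that for every $\alpha \in L$ with $\alpha^2 \neq 1$, the triple $T_\alpha := \{1 + \alpha,\; 1 - \alpha,\; 1 - \alpha^2\}$ is contained in $1 + L$ and consists of nonzero elements. Indeed $L$ is closed under multiplication and contains $-1$, so $-\alpha$ and $-\alpha^2$ both lie in $L$, which puts $T_\alpha \subset 1 + L$, while $\alpha \neq \pm 1$ rules out any entry being zero. Applying Lemma~\ref{lem4.3} with $x = 1 + \alpha$ and $y = 1 - \alpha$ (so that $xy = 1 - \alpha^2$) then guarantees that $T_\alpha \cap S \neq \varnothing$, where $S$ denotes the set of squares of $1 + L$.

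Next I would partition $L \setminus \{1, -1\}$ into its $\frac{|L| - 2}{2}$ unordered pairs $P = \{\alpha, -\alpha\}$; since $T_\alpha = T_{-\alpha}$, the triple $T_P$ is well defined. Summing $|T_P \cap S| \geq 1$ over $P$ gives
\[
\sum_{P} |T_P \cap S| \;\geq\; \frac{|L| - 2}{2}.
\]
Re-indexing the same quantity by $s \in S$, I would bound the number of pairs $P$ whose triple contains a fixed $s = 1 + \beta$. There are two sources: either $\beta \in \{\alpha, -\alpha\}$, forcing $P = \{\beta, -\beta\}$ and requiring $\beta \neq \pm 1$; or $\beta = -\alpha^2$, which forces $\{\alpha, -\alpha\}$ to be the pair of square roots of $-\beta$ in $L$ and requires $\beta \neq -1$. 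Hence each $s \neq 0$ in $S$ lies in at most two triples. Crucially, $s = 0 = 1 + (-1)$ is automatically in $S$ (since $0 = 0^2$) but belongs to no $T_P$, because $0 \in T_P$ would force $\alpha \in \{\pm 1\}$, which is excluded. This yields
\[
\sum_{P} |T_P \cap S| \;\leq\; 2(|S| - 1).
\]
Chaining the two inequalities gives $|S| \geq \frac{|L| + 2}{4} > \frac{|L|}{4}$, as required.

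The step I expect to require the most care is the multiplicity bookkeeping. Verifying that each nonzero square $1 + \beta$ contributes to at most two triples is straightforward case analysis, but the \emph{strict} inequality hinges on the special role of $s = 0$: it is a square yet produces no triple, and this tiny slack is exactly what upgrades the naive bound $|S| \geq |L|/4 - \tfrac{1}{2}$ to $|S| > |L|/4$.
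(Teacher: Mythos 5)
Your proof is correct and follows essentially the same route as the paper's: both hinge on applying Lemma~\ref{lem4.3} to the triple $\{1+\alpha,\,1-\alpha,\,1-\alpha^2\}$ and both land on the bound $|S| \geq \frac{|L|}{4} + \frac{1}{2}$, with the special role of $0 = 1+(-1)$ supplying the strict inequality. The paper organizes the count as Legendre-symbol partial sums over all $\alpha$ (each nonzero square hit at most $2+2 = 4$ times, giving $4(M-1) \geq |L|-2$), which is precisely your multiplicity-two incidence bound over the unordered pairs $\{\alpha,-\alpha\}$.
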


\begin{proof}
Let $m = |1+L| = |L|$ and let $\alpha$ be a generator of $L$. The integer $m$ is even because $-1 \in L$. Moreover, $\alpha^{\frac{m}{2}} = -1$. Denote $M$ the number of squares of $1 + L$. Then $$M = 1 + \sum_{\substack{i=0, \\ i \neq \frac{m}{2}}}^{m-1} \frac{1}{2} \left( 1 + \left( \frac{1+\alpha^{i}}{p} \right) \right)$$ where the starting $1$ corresponds to $1 + \alpha^{\frac{m}{2}} = 0$. Since $-1 \in L$, $1 + L = 1 - L$ and we also have $$M = 1 + \sum_{i = 1}^{m-1} \frac{1}{2} \left( 1 + \left( \frac{1-\alpha^{i}}{p} \right) \right)$$ where the starting $1$ corresponds to $1 - \alpha^{0} = 0$. 

For every $i = 0, 1, \ldots, m-1$, the collection $\{1 + \alpha^i, 1 - \alpha^i, 1 - \alpha^{2 i}\}$ must contain at least one square since $(1 + \alpha^i) (1 - \alpha^i) = 1 - \alpha^{2 i}$. By Lemma~\ref{lem4.3}, $$\sum_{\substack{i=1, \\ i \neq \frac{m}{2}}}^{m-1} \frac{1}{2} \left( 1 + \left( \frac{1+\alpha^i}{p} \right) \right) + \frac{1}{2} \left( 1 + \left( \frac{1-\alpha^{i}}{p} \right) \right) + \frac{1}{2} \left( 1 + \left( \frac{1-\alpha^{2i}}{p} \right)\right) \geq m-2.$$ The first two sums of $\frac{1}{2} \left( 1 + \left( \frac{1+\alpha^i}{p} \right) \right)$ and $\frac{1}{2} \left( 1 + \left( \frac{1-\alpha^i}{p} \right) \right)$ are partial sums of $M-1$ with nonnegative terms. It follows that $2 (M-1) + M'\geq m-2$ where $$M' = \sum_{\substack{i=1, \\ i \neq \frac{m}{2}}}^{m-1} \frac{1}{2} \left( 1 + \left( \frac{1-\alpha^{2i}}{p} \right) \right).$$ Observe that $\alpha^{2 i} = \alpha^{2 (\frac{m}{2} + i)}$ for $i = 1, \ldots, \frac{m}{2} - 1$. Thus, $$M' = 2 \sum_{i = 1}^{\frac{m}{2} - 1} \frac{1}{2} \left( 1 + \left( \frac{1-\alpha^{2i}}{p} \right) \right).$$ The set $\{1 - \alpha^{2i} \mid i = 1, \ldots, \frac{m}{2} - 1\}$ is a subset of $1 - L$ and then $\frac{M'}{2}$ is a partial sum of $M-1$. We obtain $M' \leq 2 (M-1)$. 
As a consequence, we have $4 (M-1) \geq m-2$. Hence, $M \geq \frac{m}{4} + \frac{1}{2}$, as desired.
\end{proof}

Now we can prove the main theorem below.

\begin{thm}
\label{thm3.2}
Let $p$ be an odd prime. If $\ell = [\zz_p^{\times} : \langle -1, 2 \rangle]$ is an odd prime, then $s(\ell) \neq 0$. 
\end{thm}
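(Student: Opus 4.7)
The plan is to combine three ingredients already in place: the formula from Proposition~\ref{prop4.2} expressing the number of squares in $1+L$ in terms of $A(0,0)$ and $s(\ell)$, the lower bound on squares from Lemma~\ref{lem4.4}, and the upper bound on $A(0,0)$ from Lemma~\ref{lem4.2}. The hypothesis that $\ell$ is an odd prime is exactly what makes Proposition~\ref{prop4.2} directly applicable, because when $\ell$ is prime every $i \in \{1, \dots, \ell-1\}$ satisfies $(i,\ell) = 1$, so the sum $\sum_{i=0}^{\ell-1} A(i, 2i)$ decomposes cleanly as $A(0,0) + s(\ell)$.

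First I would write $M$ for the number of squares in $1+L$. Proposition~\ref{prop4.2} then yields the identity
\begin{equation*}
s(\ell) \;=\; 2M - 1 - A(0,0).
\end{equation*}
So it suffices to bound $M$ from below and $A(0,0)$ from above and verify the resulting arithmetic is strict. Since $-1 \in L$, $|L| = (p-1)/\ell$ is even, so $\lceil (p-1)/(2\ell) \rceil = |L|/2$, and Lemma~\ref{lem4.2} specializes to
\begin{equation*}
A(0,0) \;\le\; \frac{|L|}{2} - 1.
\end{equation*}
From the proof of Lemma~\ref{lem4.4} we also have the explicit strengthening $M \ge |L|/4 + 1/2$.

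Plugging these two inequalities into the identity gives
\begin{equation*}
s(\ell) \;\ge\; 2\!\left(\frac{|L|}{4} + \frac{1}{2}\right) - 1 - \left(\frac{|L|}{2} - 1\right) \;=\; 1,
\end{equation*}
and since each $A(i,2i) \ge 0$ and the bound is strict, we conclude $s(\ell) \ne 0$, as required.

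I do not anticipate a real obstacle in the step above: the substantive work is already absorbed into the two cited lemmas (the square-counting argument of Lemma~\ref{lem4.4} using the Legendre-symbol identity on $\{1+\alpha^i, 1-\alpha^i, 1-\alpha^{2i}\}$, and the rank-based cyclotomic-number bound of Lemma~\ref{lem4.2}). The only subtlety is making sure the strict inequality survives; this is why the $+1/2$ term in Lemma~\ref{lem4.4} matters, and it is also why the argument would not go through verbatim for composite $\ell$: then $s(\ell)$ omits contributions with $\gcd(i,\ell) > 1$, and those missing terms would have to be absorbed into an enlarged $A(0,0)$ bound, destroying the razor-thin margin.
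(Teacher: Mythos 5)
Your proposal is correct and follows exactly the paper's own argument: combine Proposition~\ref{prop4.2}, the lower bound on squares of $1+L$ from Lemma~\ref{lem4.4}, and the upper bound $A(0,0) \leq \frac{|L|}{2} - 1$ from Lemma~\ref{lem4.2} to force $s(\ell) > 0$. The only cosmetic difference is that you invoke the sharper form $M \geq \frac{|L|}{4} + \frac{1}{2}$ from the proof of Lemma~\ref{lem4.4} to land on $s(\ell) \geq 1$, whereas the paper uses the stated strict inequality $M > \frac{1}{4}|L|$ and concludes $s(\ell) > 0$ directly; both are valid.
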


\begin{proof}
According to Proposition~\ref{prop4.2}, the number of squares of $1 + L$ is $\frac{1}{2} (1 + A(0,0) + s(\ell))$. By Lemma~\ref{lem4.4}, this number is greater than $\frac{1}{4} |L|$ and then $1 + A(0,0) + s(\ell) > \frac{1}{2} |L|$. From the below of Lemma~\ref{lem4.2}, $1 + A(0,0) \leq \frac{1}{2} |L|$. Therefore, $s(\ell) > 0$.
\end{proof}

Theorem~\ref{thm3.2} provides the positivity of Conjecture~\ref{conjB} for primes $p$ such that $[\zz_p^{\times} : \langle -1, 2 \rangle]$ is a prime. This also answers Conjecture~\ref{conjA} for such primes. Thus we have the following consequence.

\begin{cor}
\label{cor3.8}
Let $p$ be an odd prime such that $4 \nmid o_p(2)$. If $\ell = [\zz_p^{\times} : \langle -1, 2 \rangle]$ is an odd prime, then the optimal conflict-avoiding codes of length $p$ and weight $3$ has the size $$\frac{p-1-2 \ell}{4} + \left\lfloor \frac{\ell}{3} \right\rfloor.$$
\end{cor}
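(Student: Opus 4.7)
The plan is to combine Theorem~\ref{thm3.2} with the reductions recorded in Section~\ref{sec1}. Since $\ell$ is an odd prime and in particular $\ell\geq 3$, Theorem~\ref{thm3.2} gives $s(\ell)\neq 0$, so Lemma~\ref{lem2.3} yields Conjecture~\ref{conjB} for $p$: there exist a generator $tL$ of $\zz_p^\times/L$ and elements $b\in tL$, $c\in t^2L$ with $1+b=c$. Multiplying this identity through by $t^{3i}$ for $i=0,1,\ldots,\lfloor\ell/3\rfloor-1$ produces, exactly as in the paragraph following Conjecture~\ref{conjB}, a collection of $\lfloor\ell/3\rfloor$ triples $(a_i,b_i,c_i)\in A_i\times B_i\times C_i$ with $a_i+b_i=c_i$, where the cosets $A_i=t^{3i}L$, $B_i=t^{3i+1}L$, $C_i=t^{3i+2}L$ are pairwise distinct because $tL$ has order $\ell$ in $\zz_p^\times/L$. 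Thus Conjecture~\ref{conjA} holds for this~$p$.

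Next I would feed these triples into the FuLoShu construction illustrated by the $p=73$ example in Section~\ref{sec1}. Each triple yields the nonequi-difference codeword ${\bf x}_i=\{0,a_i,c_i\}$ with difference set $\Delta({\bf x}_i)=\{\pm a_i,\pm b_i,\pm c_i\}$, consuming one $L$-orbit inside each of the three cosets $A_i,B_i,C_i$. The remaining elements of every coset of $L$ are then packaged into equi-difference codewords of the form $\{0,\,g^k\cdot 2^{2j},\,g^k\cdot 2^{2j+1}\}$, whose difference sets are automatically pairwise disjoint and disjoint from the $\Delta({\bf x}_i)$. Under the hypothesis $4\nmid o_p(2)$, the definition in Section~\ref{sec1} gives $O(p)=(p-1)/|L|=\ell$, so this packaging attains the maximal count $M^e(p)=(p-1-2\ell)/4$ of equi-difference codewords and the total size of the assembled CAC is
\[
\frac{p-1-2\ell}{4}+\left\lfloor\frac{\ell}{3}\right\rfloor.
\]

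To finish, I would simply note that this quantity equals the upper bound $M^e(p)+\lfloor O(p)/3\rfloor$ from~\eqref{upper bound: FLS}, so the constructed code is optimal and its size equals $M(p)$. There is essentially no obstacle remaining: all of the work lies in Theorem~\ref{thm3.2}, and what is left is just the bookkeeping that turns a single triple $(1,b,c)$ provided by Conjecture~\ref{conjB} into $\lfloor\ell/3\rfloor$ triples in distinct cosets for Conjecture~\ref{conjA}, together with the identification $O(p)=\ell$ that makes the lower bound meet the upper bound~\eqref{upper bound: FLS} exactly.
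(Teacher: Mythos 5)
Your proposal is correct and follows essentially the same route as the paper: the paper derives Corollary~\ref{cor3.8} by noting that Theorem~\ref{thm3.2} (via Lemma~\ref{lem2.3}) establishes Conjecture~\ref{conjB}, hence Conjecture~\ref{conjA}, for such $p$, and then invokes the construction of \cite{FuLoShu} together with the upper bound~\eqref{upper bound: FLS} and the identification $O(p)=\ell$. Your write-up simply makes explicit the bookkeeping that the paper leaves implicit.
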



For example, if $p = 1229241823 > 2^{30}$, then $o_p(2)$ is odd and $\ell = 18307$ is a prime. Thus, we can easily conclude that the size of optimal conflict-avoiding codes of length $p$ and weight $3$ is $307307404$. Indeed, one can check that $A(1, 2) = 4$ with the primitive root $3$. One element of $(1 + 3L) \cap 9L$ is $1 + 3 \cdot 2^{1128543547} = 9 \cdot 2^{249779730}$.

An optimal CAC of length $p^k$, $k \geq 1$, can be constructed from an optimal CAC of length $p$ by \cite[Theorem~8]{FuLoShu}. Applying Corollary~\ref{cor3.8} to \cite[Theorem~8]{FuLoShu}, we obtain the following conclusion.

\begin{cor}
\label{cor4.2}
Let $p > 3$ be a non-Wieferich prime such that $4 \nmid o_p(2)$. If $\ell = [\zz_p^{\times} : \langle -1, 2 \rangle]$ is an odd prime, then an optimal conflict-avoiding code of length $p^k$ and weight $3$ has the size $$\frac{p^k - 1 - 2 k \ell}{4} + k \left\lfloor \frac{\ell}{3} \right\rfloor$$ for every $k \in \nn$.
\end{cor}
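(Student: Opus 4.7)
The plan is to obtain Corollary \ref{cor4.2} as a direct consequence of Corollary \ref{cor3.8} together with the lifting theorem \cite[Theorem~8]{FuLoShu}. First I would apply Corollary \ref{cor3.8}: under the standing hypotheses (odd prime $p$, $4 \nmid o_p(2)$, and $\ell = [\zz_p^\times : \langle -1, 2 \rangle]$ an odd prime) the base-case size is
$$M(p) = \frac{p-1-2\ell}{4} + \left\lfloor \frac{\ell}{3} \right\rfloor.$$
This is the input that feeds the lifting step.

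Second, I would verify that the hypotheses of \cite[Theorem~8]{FuLoShu} are met. That theorem demands $p > 3$ non-Wieferich, which is exactly what is added in the statement of Corollary \ref{cor4.2}. The role of the non-Wieferich condition is to force $o_{p^k}(2) = p^{k-1}\, o_p(2)$; consequently $|\langle -1, 2 \rangle|$ in $\zz_{p^k}^{\times}$ equals $p^{k-1}$ times its value in $\zz_p^{\times}$, the index $\ell$ is preserved along the tower $\zz_p \leftarrow \zz_{p^2} \leftarrow \cdots \leftarrow \zz_{p^k}$, and $4 \nmid o_{p^k}(2)$. These are the structural facts that make the lifting procedure well-defined at every level.

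Third, I would read off the resulting size from \cite[Theorem~8]{FuLoShu}. Heuristically the lift is level-by-level along the $p$-adic filtration $\zz_{p^k} \supset p \zz_{p^k} \supset \cdots \supset p^{k-1} \zz_{p^k}$: each of the $k$ strata contributes its own batch of $\lfloor \ell/3 \rfloor$ nonequi-difference codewords, built from the same cyclotomic triples that witness $s(\ell) \neq 0$ in Theorem \ref{thmA}, and the remaining coset slots in every stratum are saturated with equi-difference codewords up to the Fu--Lo--Shu upper bound in \eqref{upper bound: FLS}. Collecting the contributions and simplifying yields
$$M(p^k) = \frac{p^k - 1 - 2 k \ell}{4} + k \left\lfloor \frac{\ell}{3} \right\rfloor,$$
which is the claimed formula.

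The main obstacle is essentially bookkeeping: tracking how the lift interacts with each stratum and checking that the count produced by \cite[Theorem~8]{FuLoShu} agrees with the expression above. Because the lifting theorem is invoked as a black box, no genuinely new mathematical input is required beyond the base case Corollary \ref{cor3.8}; the remaining work collapses to verifying the hypotheses and carrying out the arithmetic simplification.
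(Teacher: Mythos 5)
Your proposal matches the paper's own derivation: the paper likewise obtains Corollary \ref{cor4.2} by feeding the base-case size from Corollary \ref{cor3.8} into the lifting theorem \cite[Theorem~8]{FuLoShu}, invoked as a black box under the added non-Wieferich hypothesis. The extra heuristic detail you give about the $p$-adic strata is not needed (and not supplied) by the paper, but the essential argument is the same.
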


\section{Conclusion Remark}

According to Corollary~\ref{cor3.9} and \ref{cor3.8}, we now know the size of optimal CAC of prime length $p$ and weight $3$ where $4 \nmid o_p(2)$ and either $\ell = \left[\zz_p^{\times} : \langle -1, 2 \rangle \right] = 4$ or $\ell$ is an odd prime. 
For composite number $\ell \geq 6$, it is not easy to determine whether $s(\ell) \neq 0$ or not. For example, let $\ell = 6$ and using properties given in Section~\ref{sec3}, the extended cyclotomic matrix can be simplified such as $$B = \left( \begin{matrix} b_{0,0} & b_{0,1} & b_{0,2} & b_{0,3} & b_{0,4} & b_{0,5} \\ b_{0,1} & b_{0,5} & b_{1,2} & b_{1,3} & b_{1,4} & b_{1,2} \\ b_{0,2} & b_{1,2} & b_{0,4} & b_{1,4} & b_{2,4} & b_{1,3} \\ b_{0,3} & b_{1,3} & b_{1,4} & b_{0,3} & b_{1,3} & b_{1,4} \\ b_{0,4} & b_{1,4} & b_{2,4} & b_{1,3} & b_{0,2} & b_{1,2} \\ b_{0,5} & b_{1,2} & b_{1,3} & b_{1,4} & b_{1,2} & b_{0,1} \end{matrix} \right).$$ Suppose that $s(6) = b_{1,2} + b_{5, 4} = 0$. Then $$B = \left( \begin{matrix} b_{0,0} & b_{0,1} & b_{0,2} & b_{0,3} & b_{0,4} & b_{0,5} \\ b_{0,1} & b_{0,5} & 0 & b_{1,3} & b_{1,4} & 0 \\ b_{0,2} & 0 & b_{0,4} & b_{1,4} & b_{2,4} & b_{1,3} \\ b_{0,3} & b_{1,3} & b_{1,4} & b_{0,3} & b_{1,3} & b_{1,4} \\ b_{0,4} & b_{1,4} & b_{2,4} & b_{1,3} & b_{0,2} & 0 \\ b_{0,5} & 0 & b_{1,3} & b_{1,4} & 0 & b_{0,1} \end{matrix} \right).$$ By Lemma~\ref{lem3.3}(iv), we will obtain that $$\left\{ \begin{array}{l} b_{0,0} + b_{0,2} + b_{0,3} + b_{0,4} = b_{1,3} + b_{1,4}, \\ b_{0,1} + b_{0,5} = b_{0,2} + b_{0,4} + b_{2,4} = 2 b_{0,3} + b_{1,3} + b_{1,4}, \\ b_{0,3} + b_{1,3} + b_{1,4} = \frac{|L|}{2}. \end{array} \right.$$ This gives the equality $$b_{2,4} = b_{0,0} + 3 b_{0,3}.$$  
On the other hand, according to Lemma~\ref{lem3.4}, the number of squares of $1 + L$ is $$\frac{1}{2} (b_{0,0} + b_{0,3} + 2b_{2,4}).$$ By Lemma~\ref{lem4.4}, this number is greater than $\frac{1}{4} |L|$ and then $b_{0,0} + b_{0,3} + 2 b_{2,4} > \frac{1}{2} |L|$. Using the equality above, we obtain $3 b_{0,0} + 7 b_{0, 3} > \frac{1}{2} |L|$. However, the upper bound for $b_{0,0}$ given in Lemma~\ref{lem4.3} (or for $b_{0, 3}$ given in \cite{BetHirKomMun}) can not give further information in this case. For composite numbers $\ell \geq 6$, we think that finding smaller upper bounds for cyclotomic numbers may lead to a better approach.
We hope that these ideas could give some inspiration for doing this problem.

\section*{Acknowledgment}

We would like to thank Professor Yuan-Hsun Lo for bringing \cite{FuLoShu} to our attention. 
The first named author is partially supported by MOST grant 110-2115-M-003-007-MY2. 
The second named author is partially supported by MOST grant 111-2115-M-003-005. 
The third named author is supported MOST grant 111-2811-M-003-028.

\bibliographystyle{alphaurl} 

\begin{thebibliography}{BHKM13}

\bibitem[AT20]{AhmTan}
M.H. Ahmed and J.~Tanti.
\newblock Cyclotomic numbers and jacobi sums: a survey.
\newblock In {\em Class groups of number fields and related topics}, pages
  119--140. Springer, Singapore, 2020.
\newblock \href {https://doi.org/10.1007/978-981-15-1514-9_12}
  {\path{doi:10.1007/978-981-15-1514-9_12}}.

\bibitem[ATP21]{AhmTanPus}
M.H. Ahmed, J.~Tanti, and S.~Pushp.
\newblock A public key cryptosystem using cyclotomic matrices.
\newblock In {\em Coding Theory - Recent Advances, New Perspectives and
  Applications}, pages 86--132. IntechOpen, 2021.
\newblock \href {https://doi.org/10.5772/intechopen.101105}
  {\path{doi:10.5772/intechopen.101105}}.

\bibitem[BHKM13]{BetHirKomMun}
K.~Betsumiya, M.~Hirasaka, T.~Komatsu, and A.~Munemasa.
\newblock Upper bounds on cyclotomic numbers.
\newblock {\em Linear Algebra Appl.}, 438(1):111--120, 2013.
\newblock \href {https://doi.org/10.1016/j.laa.2012.06.045}
  {\path{doi:10.1016/j.laa.2012.06.045}}.

\bibitem[Bur11]{Bur}
D.M. Burton.
\newblock {\em Elementary Number Theory}.
\newblock McGraw-Hill, New York, 2011.

\bibitem[Dic35a]{Dic3}
L.E. Dickson.
\newblock Cyclotomy and trinomial congruences.
\newblock {\em Trans. Amer. Math. Soc.}, 37(3):363--380, 1935.
\newblock \href {https://doi.org/10.1090/S0002-9947-1935-1501791-3}
  {\path{doi:10.1090/S0002-9947-1935-1501791-3}}.

\bibitem[Dic35b]{Dic}
L.E. Dickson.
\newblock Cyclotomy, higher congruences, and {Waring}'s problem.
\newblock {\em Amer. J. Math.}, 57(2):391--424, 1935.
\newblock \href {https://doi.org/10.2307/2371217} {\path{doi:10.2307/2371217}}.

\bibitem[Dic35c]{Dic4}
L.E. Dickson.
\newblock Cyclotomy when $e$ is composite.
\newblock {\em Trans. Amer. Math. Soc.}, 38(2):187--200, 1935.
\newblock \href {https://doi.org/10.1090/S0002-9947-1935-1501808-6}
  {\path{doi:10.1090/S0002-9947-1935-1501808-6}}.

\bibitem[FLM10]{FuLinMis}
H.-L. Fu, Y.-H. Lin, and M.~Mishima.
\newblock Optimal conflict-avoiding codes of even length and weight $3$.
\newblock {\em IEEE Trans. Inform. Theory}, 56(11):5747--5756, 2010.
\newblock \href {https://doi.org/10.1109/TIT.2010.2069270}
  {\path{doi:10.1109/TIT.2010.2069270}}.

\bibitem[FLS14]{FuLoShu}
H.-L. Fu, Y.-H. Lo, and S.W. Shum.
\newblock Optimal conflict-avoiding codes of odd length and weight three.
\newblock {\em Des. Codes Cyptogr.}, 72(2):289--309, 2014.
\newblock \href {https://doi.org/10.1007/s10623-012-9764-5}
  {\path{doi:10.1007/s10623-012-9764-5}}.

\bibitem[Gau01]{Gau}
C.F. Gauss.
\newblock {\em Disquisitiones arithmeticae}.
\newblock 1801.
\newblock Translated and with a preface by Arthur A. Clarke. Revised by William
  C. Waterhouse, Cornelius Greither and A. W. Grootendorst and with a preface
  by Waterhouse. Springer-Verlag, New York, 1986.
\newblock \href {https://doi.org/10.1007/978-1-4939-7560-0}
  {\path{doi:10.1007/978-1-4939-7560-0}}.

\bibitem[GV93]{GyoVaj}
L.~Gy{\"{o}}rfi and I.~Vajda.
\newblock Constructions of protocol sequences for multiple access collision
  channel without feedback.
\newblock {\em IEEE Trans. Inform. Theory}, 39(5):1762--1765, 1993.
\newblock \href {https://doi.org/10.1109/18.259673}
  {\path{doi:10.1109/18.259673}}.

\bibitem[HLS]{HsiLiSun}
L.-C. Hsia, H.-C. Li, and W.-L. Sun.
\newblock Certain diagonal equations and conflict-avoiding codes of prime
  lengths.
\newblock preprint.

\bibitem[JMJ{\etalchar{+}}07]{JimMisJanTeyTon}
M.~Jimbo, M.~Mishima, S.~Janiszewski, A.Y. Teymorian, and V.D. Tonchev.
\newblock On conflict-avoiding codes of length $n = 4 m$ for three active
  users.
\newblock {\em IEEE Trans. Inform. Theory}, 53(8):2732--2742, 2007.
\newblock \href {https://doi.org/10.1109/TIT.2007.901233}
  {\path{doi:10.1109/TIT.2007.901233}}.

\bibitem[Kat02]{Kat2}
S.A. Katre.
\newblock The cyclotomic problem.
\newblock In {\em Currents trends in number theory}, pages 59--72. Hindustan
  Book Agency, New Delhi, 2002.
\newblock \href {https://doi.org/10.1007/978-93-86279-09-5_5}
  {\path{doi:10.1007/978-93-86279-09-5_5}}.

\bibitem[Lev07]{Lev}
V.I. Levenshtein.
\newblock Conflict-avoiding codes and cyclic triple systems.
\newblock {\em Probl. Inf. Transm.}, 43(3):199--212, 2007.
\newblock \href {https://doi.org/10.1134/S0032946007030039}
  {\path{doi:10.1134/S0032946007030039}}.

\bibitem[LMSJ14]{LinMisSatJim}
Y.~Lin, M.~Mishima, J.~Satoh, and M.~Jimbo.
\newblock Optimal equi-difference conflict-avoiding codes of odd length and
  weight three.
\newblock {\em Finite Fields Appl.}, 26:49--68, 2014.
\newblock \href {https://doi.org/10.1016/j.ffa.2013.11.001}
  {\path{doi:10.1016/j.ffa.2013.11.001}}.

\bibitem[LT05]{LevTon}
V.I. Levenshtein and V.D. Tonchev.
\newblock Optimal conflict-avoiding codes for three active users.
\newblock {\em Proc. IEEE Int. Symp. Inform. Theory}, pages 535--537, 2005.
\newblock \href {https://doi.org/10.1109/ISIT.2005.1523392}
  {\path{doi:10.1109/ISIT.2005.1523392}}.

\bibitem[LW75]{LeoWil}
P.A. Leonard and K.S. Williams.
\newblock The cyclotomic numbers of order eleven.
\newblock {\em Acta Arith.}, 26(4):365--383, 1975.
\newblock \href {https://doi.org/10.4064/aa-26-4-365-383}
  {\path{doi:10.4064/aa-26-4-365-383}}.

\bibitem[Mat90]{Mat}
P.~Mathys.
\newblock A class of codes for a $t$ active users out of $n$ multiple-access
  communication system.
\newblock {\em IEEE Trans. Inform. Theory}, 36(6):1206--1219, 1990.
\newblock \href {https://doi.org/10.1109/18.59923}
  {\path{doi:10.1109/18.59923}}.

\bibitem[MFU09]{MisFuUru}
M.~Mishima, H.-L. Fu, and S.~Uruno.
\newblock Optimal conflict-avoiding codes of length $n \equiv 0$ (mod~$16$) and
  weight $3$.
\newblock {\em Des. Codes Cryptogr.}, 52(3):275--291, 2009.
\newblock \href {https://doi.org/10.1007/s10623-009-9282-2}
  {\path{doi:10.1007/s10623-009-9282-2}}.

\bibitem[MM17]{MisMom}
M.~Mishima and K.~Momihara.
\newblock A new series of optimal tight conflict-avoiding codes of weight $3$.
\newblock {\em Discrete Math.}, 340(4):617--629, 2017.
\newblock \href {https://doi.org/10.1016/j.disc.2016.12.003}
  {\path{doi:10.1016/j.disc.2016.12.003}}.

\bibitem[Mom07]{Mom}
K.~Momihara.
\newblock Necessary and sufficient conditions for tight equi-difference
  conflict-avoiding codes of weight three.
\newblock {\em Des. Codes Cryptogr.}, 45(3):379--390, 2007.
\newblock \href {https://doi.org/10.1007/s10623-007-9139-5}
  {\path{doi:10.1007/s10623-007-9139-5}}.

\bibitem[MZS14]{MaZhaShe}
W.~Ma, C.~Zhao, and D.~Shen.
\newblock New optimal constructions of conflict-avoiding codes of odd length
  and weight $3$.
\newblock {\em Des. Codes Cyptogr.}, 73(3):791--804, 2014.
\newblock \href {https://doi.org/10.1007/s10623-013-9827-2}
  {\path{doi:10.1007/s10623-013-9827-2}}.

\bibitem[NGM92]{NguGyoMas}
Q.A. Nguyen, L.~Gy{\"{o}}rfi, and J.L. Massey.
\newblock Constructions of binary constant-weight cyclic codes and cyclically
  permutable codes.
\newblock {\em IEEE Trans. Inform. Theory}, 38(3):940--949, 1992.
\newblock \href {https://doi.org/10.1109/18.135636}
  {\path{doi:10.1109/18.135636}}.

\bibitem[Raj80]{Raj}
A.R. Rajwade.
\newblock Cyclotomy--a survey article.
\newblock {\em Math. Student}, 48(1):70--115, 1980.

\bibitem[TR02]{TsyRub}
B.S. Tsybakov and A.R. Rubinov.
\newblock Some constructions of conflict-avoiding codes.
\newblock {\em Probl. Inf. Transm.}, 38(4):268--279, 2002.
\newblock \href {https://doi.org/10.1023/A:1022045812079}
  {\path{doi:10.1023/A:1022045812079}}.

\bibitem[WF13]{WuFu}
S.-L. Wu and H.-L. Fu.
\newblock Optimal tight equi-difference conflict-avoiding codes of length $n =
  2^k \pm 1$ and weight $3$.
\newblock {\em J. Combin. Des.}, 21(6):223--231, 2013.
\newblock \href {https://doi.org/10.1002/jcd.21332}
  {\path{doi:10.1002/jcd.21332}}.

\end{thebibliography}
\newcommand{\etalchar}[1]{$^{#1}$}

\end{document}